\def\<#1>{\langle#1\rangle}
\newtheorem{thm}{Theorem}[section]
  \newtheorem{definition}[thm]{Definition}
   \newtheorem{corol}[thm]{Corollary}
 \newtheorem{remark}[thm]{Remark}
   \theoremstyle{definition}
\newtheorem{rem}[thm]{Remark}
\begin{document}

\title{The arithmetic of triangles  }
\author{Edward Mieczkowski\footnote{
 Gdynia, Poland;  E-mail: \texttt{edmieczyk@gmail.com}.}}

\date{}

\maketitle

\begin{abstract}

\noindent In this paper, we consider a set of similar triangles with parallel sides, along with a set of points in the plane.
It turns out that the set\\
$\mathbb{R}_2= \{\pm  \<x >=\pm (x^2,x,1); x\in\mathbb{R} \}$\\
describes this set of triangles quite well. The set $\mathbb{R}_2$ is a subset of the ring\\
$\mathbb{R}^3=\mathbb{R}\times\mathbb{R}\times\mathbb{R}= \{ (x,y,z) ; \; x,y,z\in\mathbb{R} \}$ \\
with addition and multiplication defined coordinate-wise.

\noindent The set $\mathbb{R}_2$ is equipped with two operations. Multiplication is inherited from the ring $\mathbb{R}^3$, while addition is a ternary operation that represents homothety and translation of elements in $\mathbb{R}_2$. Depending on the relative sizes of the arguments in the addition operation, there are six distinct types of geometric interpretation.

\noindent However, the defined addition has its limitations. It turns out that, within this framework, the reduction of terms with different signs is not always possible. This leads to the distinction between an equation that is true in the arithmetic sense and one that is true in the geometric sense.

\noindent A novel form of addition in $\mathbb{R}_2$ leads to intriguing properties of multiplication in $\mathbb{R}_2$, which are examined in a dedicated chapter.

\noindent In the  next section  we use the construction of adding  to  describe  the dissection of the triangle  into 15 triangles of different sides.
 
\noindent In the final two sections, we consider a set of two kinds of vectors, along with a set of points on the line. The set\\
$\mathbb{R}_1= \{\pm  \<x >=\pm (x,1); x\in\mathbb{R} \}$\\
describes this set vectors quite well and it is a one-dimensional reduction of the set $\mathbb{R}_2$. 

 \end{abstract}
Keywords: adding of triangles, dissection of triangles.

Math. Subj. Class.: 03H15, 05B45, 11U10, 52C20.

\newpage
\subsection*{ Introduction}

Let us take the ring $\mathbb{R}^3=\mathbb{R}\times\mathbb{R}\times\mathbb{R}= \{ (x,y,z) ; \; x,y,z\in\mathbb{R} \}$ with  additon and multiplication\\
\begin{align}
(x_1,y_1,z_1)+(x_2,y_2,z_2)=&\ (x_1+x_2,y_1+y_2,z_1+z_2), \label{1} \\
(x_1,y_1,z_1)\cdot (x_2,y_2,z_2)=&\ (x_1\cdot x_2,y_1\cdot y_2,z_1\cdot z_2). \label{2}
\end{align}
Let us consider the subset of the ring $\mathbb{R}^3$, the set $\mathbb{R}_2= \{\pm  \<x >=\pm (x^2,x,1); x\in\mathbb{R} \}$.\\
It is closed under multiplication (\ref{2}) but not under addition (\ref{1}).\\
The set $\mathbb{R}_2$ is closed under    the following kind of addition 
(it should be noted that the summands in below Eq.(\ref{3}) are not  the coordinates of the ring $\mathbb{R}^{3})$.
  \begin{align}
\forall x,y,z,t \in \mathbb{R}\qquad
       \<x+y+z+t> = &\  \< x+y+t > + \< x+z+t> + \< y+z+t >  \nonumber \\
                    & - \< x+t> - \<y+t> - \<z+t> +\<t> \qquad
    \label{3}
\end{align}
because the equations
\begin{align*}
\forall x,y,z,t \in \mathbb{R}\quad \forall i=2,1,0\qquad \qquad \\
       (x+y+z+t)^i = &\  ( x+y+t )^i + ( x+z+t)^i + ( y+z+t )^i   \\
                    & - ( x+t)^i - (y+t)^i - (z+t)^i +(t)^i
   \end{align*}
are true.\\
If we multiply the Eq. (\ref{3}) by $-1$, we get the definition of addition for the elements $-\<x>$ of the set $\mathbb{R}_2$.

\noindent Using Eq.(\ref{3}), one can derive by induction the identity
 \begin{align}
\forall n \in \mathbb{Z}\qquad
       \<n>=\frac{n(n+1)}{2}\<1>-(n-1)(n+1)\<0> + \frac{n(n-1)}{2}\<-1>.
    \label{4}
\end{align}
But you can check purely for accounting that  Eq. (\ref{4}) holds for each $x \in \mathbb{R}$.
\begin{align}
\forall x \in \mathbb{R}\qquad
       \<x>=\frac{x(x+1)}{2}\<1>-(x-1)(x+1)\<0> + \frac{x(x-1)}{2}\<-1>.
    \label{5}
\end{align}
Let us transform Eq. (\ref{5}). 
\begin{align*}
\< x > = & \ \frac{x^2+x}{2}\< 1> -(x^2-1)\<0> + \frac{x^2-x}{2}\< -1> \\
		= &\ x^2\frac{\< 1> -2\<0>+\< -1>}{2} +x\frac{\< 1> - \< -1>}{2} + 1\<0>.
\end{align*}
 It is easy to check that 
 the elements  $\frac{\< 1> -2\<0>+\< -1>}{2} = A_2$,  $\frac{\< 1> - \< -1>}{2} = A_1$, $\<0>=A_0$ are orthogonal. So we can put
$ A_2=(1,0,0)$, \; $A_1=(0,1,0)$, \; $A_0=(0,0,1)$ and now we know why $\<x>=(x^2,x,1)$.\\

\section{Geometric interpretation of the set $\mathbb{R}_2$.} 

\noindent  Let us set any closed  triangle on the plane (the triangle does not have to be equilateral) and let us denote it by the symbol $\< 1>$. Each triangle obtained from the fixed trinagle $\<1>$ by any translation will be denoted by $\<1>$ too.\\
The triangle obtained from the triangle $\<1>$ by any homothety of ratio $x>0$ \cite{Wol} will be denoted by $\<x>$.\\
Each point of the plane will be denoted by $\<0>$.\\
The symbol  $-\< x>$ denotes the triangle, which lying on the triangle $\< x>$ gives an empty set  denoted by an element $(0,0,0) \in \mathbb{R}^3$.\\
An element $\<x>$ for each $x \in \mathbb{R},\: x>0$  will be interpreted as a closed triangle and marked with a black color 
\tikz{\draw[fill,gray!70] (0,0)--(1,0)--(.5,.8)--(0,0);
\draw (0,0)--(1,0)--(.5,.8)--(0,0);
\draw[fill](0,0)circle(1pt);\draw[fill](1,0)circle(1pt);
\draw[fill](.5,.8)circle(1pt);} \ .
 The interior of this
triangle should be black but we would like to mark that sides and vertexes belong to  this triangle,  so we coloured  it in gray.\\
An element $-\<x>$ will be interpreted as a closed triangle (we will  call it sometimes antytriangle) and marked with a red color
\tikz{\draw[fill,red!30] (0,0)--(1,0)--(.5,.8)--(0,0);
\draw[red] (0,0)--(1,0)--(.5,.8)--(0,0);
\draw[fill,red](0,0)circle(1pt);\draw[fill,red](1,0)circle(1pt);
\draw[fill,red](.5,.8)circle(1pt);} \;.\\
An element $\<0>$ will be marked as a black point and an element $-\<0>$ will be marked as a red point.\\
To see the result of putting the black triangle on the red one, we will denote the empty set of green.\\
An element $\<-x>$ for each $x \in \mathbb{R},\: x>0$  will be interpreted as a open triangle symmetrical to the  triangle  $\<x >$ in relation to any side and will be marked with a black color
\tikz{\draw[fill,gray!70] (.5,0)--(1,.8)--(0,.8)--(.5,0);
\draw[green] (.5,0)--(1,.8)--(0,.8)--(.5,0);
\draw[fill,green](.5,0)circle(1pt);\draw[fill,green](1,.8)circle(1pt);
\draw[fill,green](0,.8)circle(1pt);} .\\
At the end an element $-\<-x>$ for each $x \in \mathbb{R},\: x>0$  will be interpreted as a open triangle symmetrical to the  triangle  $-\<x >$ in relation to any side and will be marked with a red color
\tikz{\draw[fill,red!30] (.5,0)--(1,.8)--(0,.8)--(.5,0);
\draw[green] (.5,0)--(1,.8)--(0,.8)--(.5,0);
\draw[fill,green](.5,0)circle(1pt);\draw[fill,green](1,.8)circle(1pt);
\draw[fill,green](0,.8)circle(1pt);} .\\

\noindent Why the triangles $\<-x>$ and $-\<-x>$ for $ x>0$  are open we will see after introducing the geometric interpretation of the operation (\ref{3}).\\

\noindent The geometric interpretation of the individual components of Eq. (\ref{4}) is as follows (Fig. \ref{f1}).
\begin{figure}[htbp]
\begin{center} 
\begin{tikzpicture}[>=stealth]
\draw[fill,gray!60] (0,0)--(3,4.8)--(6,0)--(0,0);
\draw (0,0)--(2,0)--(1,1.6)--(0,0);
\draw (1,0)--(3,0)--(2,1.6)--(1,0);
\draw (.5,.8)--(2.5,.8)--(1.5,2.4)--(.5,.8);
\draw (.5,.8)--(1,0)--(2,0)--(2.5,.8)--(2,1.6)--(1,1.6);
\draw[dashed] (1.8,2.88)--(1.5,2.4)--(2.1,2.4);
\draw[dashed] (2.3,2.08)--(2,1.6)--(2.6,1.6);
\draw[dashed] (2.8,1.28)--(2.5,.8)--(3.1,.8);
\draw[dashed] (3.3,.48)--(3,0)--(3.6,0);
\draw (5,0)--(6,0)--(3,4.8)--(2.5,4)--(5,0);
\draw (5,0)--(5.5,.8)--(4.5,.8)--(5,1.6)--(4,1.6)--(4.5,2.4)
--(3.5,2.4)--(4,3.2)--(3,3.2)--(3.5,4)--(2.5,4)--(3,4.8);
\draw[dashed](2.2,3.52)--(2.5,4); \draw[dashed](4.4,0)--(5,0);
\draw[dashed](2.4,3.2)--(3,3.2)--(2.7,2.72);
\draw[dashed](2.9,2.4)--(3.5,2.4)--(3.2,1.92);
\draw[dashed](3.4,1.6)--(4,1.6)--(3.7,1.12);
\draw[dashed](3.9,.8)--(4.5,.8)--(4.2,.32);
\draw node[] at (.5,.25){\small{$\<1>$}};
\draw node[] at (1.5,.25){\small{$\<1>$}};
\draw node[] at (2.5,.25){\small{$\<1>$}};
\draw node[] at (1,1.05){\small{$\<1>$}};
\draw node[] at (1,.55){\small{$\<\text{-}1>$}};
\draw node[] at (2,.55){\small{$\<\text{-}1>$}};
 \end{tikzpicture}
\caption{} \label{f1}
\end{center}
\end{figure}
\noindent The triangle $\<n>$ has $n^2$  of all triangles $\<1>$ and $\<-1>$.\\
If we assume that the length of each side of the triangle $\< 1> $ is equal to $1$ (even when the triangle is not equilateral) then $n$ is the length of each side of the triangle $\<n>$.\\
 The number 1 in $(n^2,n,1)$ means one  triangle. (Each point $\<0>$ is a triangle with sides equal to 0. Each triangle $\<-n>$ has sides equal to -n).  \\
The triangle $\<n>$ has $\frac{n(n+1)}{2}$ of triangles $\<1>$ and $\frac{n(n-1)}{2}$ of  triangles $\<-1>$.\\
Since the vertices of the closed triangles $\<1>$ overlap, these vertices as points $\<0>=(0,0,1)$ must be subtracted  in the description (\ref{4}) of the triangle $\<n>$.\\
We have $3(n-1)$  double vertices lying on the 3 sides of the triangle and $\frac{(n-2)(n-1)}{2}$ triple vertices lying inside the triangle $\<n>$. So we must substract \\[.1cm]
$3(n-1)+2\dfrac{(n-2)(n-1)}{2}= (n-1)(n+1)$\\[.1cm]
vertices $\<0>$ lying in the description (\ref{4}) of the triangle $\<n>$.\\

\noindent Using  linear combinations of  elements of the set $\mathbb{R}_2$ we can describe different geometric figures (as elements of the ring $\mathbb{R}^3$) and thus better understand the meaning of the  number $x$ in the element $\<x^2,x,1>$. (The colors white and green mean the same, ie. empty set) (Fig.~\ref{f2}, ~\ref{f3}).
\begin{figure}[htbp]
\begin{center} 
\begin{tikzpicture}[>=stealth,scale=0.50]
\draw[fill,gray!60](2.2,5)--(0,1)--(6.6,1)--(4.4,5)--cycle;
\draw[dashed,green](2.2,5)--(3.3,7)--(4.4,5);
\draw[green,thick](2.2,5)--(4.4,5);
\draw(2.2,5)--(0,1)--(6.6,1)--(4.4,5);
\draw[fill,green](2.2,5)circle(1.5pt);\draw[fill,green](4.4,5)circle(1.5pt);
\draw node[] at (3.3,1.3){\footnotesize{$a$}};\draw node[] at (3.3,4.5){\footnotesize{$b$}};
\draw node[right] at (5.6,3){\footnotesize $a-b$};
\draw node at (3.3,0){\footnotesize $\< a > - \<b > = (a^2-b^2,a-b,0), $};
\draw node at (3.3,-1){\footnotesize $ a>b>0$.};

\draw[fill,gray!60](12,7)--(14.2,3)--(16.4,3)--(18.6,7)--cycle;
\draw[green,thick](14.2,3)--(16.4,3);
\draw(16.4,3)--(18.6,7)--(12,7)--(14.2,3);
\draw[fill,green](14.2,3)circle(1.5pt);\draw[fill,green](16.4,3)circle(1.5pt);
\draw node[] at (15.3,3.4){\footnotesize{$-b$}};\draw node[] at (15.3,6.6){\footnotesize{$-a$}};
\draw node[right] at (17.5,5){\footnotesize $-a-(-b)$};
\draw[dashed,green](14.2,3)--(15.3,1)--(16.4,3);
\draw node at (15.3,0){\footnotesize $\< -a > - \<-b > = (a^2-b^2,-a+b,0), $};
\draw node at (15.3,-1){\footnotesize $ a>b>0$.};
 \end{tikzpicture}
\caption{} \label{f2}
\end{center}
\end{figure}
\begin{figure}[htbp]
\begin{center} 
\begin{tikzpicture}[>=stealth,scale=0.50]
\draw[dashed,green](0,1)--(3.3,7)--(6.6,1)--cycle;
\draw[fill,gray!60](0,1)--(2.2,1)--(4.4,5)--(2.2,5)--cycle;
\draw(2.2,5)--(0,1)--(2.2,1);
\draw[green,thick](2.2,1)--(4.4,5)--(2.2,5);
\draw[fill,green](2.2,1)circle(1.5pt);\draw[fill,green](2.2,5)circle(1.5pt);
\draw[fill,red](4.4,5)circle(1.5pt);
\draw node[] at (3.6,3){\footnotesize{$a$}};\draw node[] at (3.3,5.4){\footnotesize{$b$}};
\draw node[] at (3.3,0){\footnotesize $\< a+b > -\<a>- \<b > = (2ab,0,-1), $};
\draw node at (3.3,-1){\footnotesize $ a,b>0$.};

\draw[dashed,green](12,1)--(15.3,7)--(18.6,1)--cycle;
\draw[fill,gray!60](13.1,1)--(16.4,1)--(17.5,3)--(16.4,5)--(14.2,5)--(12.55,2)--cycle;
\draw(13.1,1)--(16.4,1)--(17.5,3)--(16.4,5)--(14.2,5)--(12.55,2)--cycle;
\draw[green,thick](16.4,1)--(17.5,3);
\draw[green,thick](16.4,5)--(14.2,5);
\draw[green,thick](12.55,2)--(13.1,1);
\draw[fill,green](16.4,1)circle(1.5pt);\draw[fill,green](17.5,3)circle(1.5pt);
\draw[fill,green](16.4,5)circle(1.5pt);\draw[fill,green](14.2,5)circle(1.5pt);
\draw[fill,green](12.55,2)circle(1.5pt);\draw[fill,green](13.1,1)circle(1.5pt);
\draw node[] at (14.8,1.3){\footnotesize{$b+t$}};\draw node[] at (15.3,5.4){\footnotesize{$b$}};
\draw node[] at (12.6,3.5){\footnotesize{$a+t$}};\draw node[] at (17.2,1.9){\footnotesize{$a$}};
\draw node[] at (17.8,4.1){\footnotesize{$c+t$}};\draw node[] at (12.6,1.3){\footnotesize{$c$}};
\draw node at (15.3,0){\footnotesize $\< a+b+c+t > - \<a >-\<b>-\<c> = (x,t,-2), $};
\draw node at (15.3,-1){\footnotesize $ a,b,c>0$.};
\end{tikzpicture}
\caption{} \label{f3}
\end{center}
\end{figure}

\noindent\textit{Geometric interpretation of the operation (\ref{3}).}\\ 

\noindent Let us fix the ordered successive components   $ x,y,z $ of the sum $ \< x+y+z+ t > $. This components extend or shorten the triangle $ \< t> $ in the directions I, II, III, or \mbox{I ', II ', III'}, depending on whether the numbers $ x, y, z $     are positive or negative (Fig.~\ref{f4}).

\noindent Therefore Eq. (\ref {3}) should be properly written as
\begin{align*}\begin{split}
  \forall x,y,z,t \in \mathbb{R} \qquad\quad \\
   \<x+y+z+t>&= \< x+y+0+t> + \<x+0+z+t> + \< 0+y+z+t>  \\
             &\quad - \<x+0+0+t> -\< 0+y+0+t> - \< 0+0+z+t > \\
             &\quad + \< 0+0+0+t >.
     \end{split}   \end{align*}

\noindent Since the above record is long and inconvenient, we will replace it with the following one

\begin{align}\begin{split}
  \forall x,y,z,t \in \mathbb{R} \qquad\quad \\
   \<x,y,z,t>&= \< x,y,0,t> + \<x,0,z,t> + \< 0,y,z,t>  \\
             &\quad - \<x,0,0,t> -\< 0,y,0,t> - \< 0,0,z,t > 
              + \< 0,0,0,t >,
    \label{7}
 \end{split}   \end{align}
 where we can write $\< 0,0,0,t >$   as $\< t >$.\\
 From now on, we will replace Eq. (\ref{3}) with Eq. (\ref{7}).\\
 But sometimes, especially when referring to a single triangle, we will write briefly as  $\<x+y+t>, \  \<x+t>, \ \<t>$. 

\begin{figure}[htbp]
\begin{center} 
\begin{tikzpicture}[>=stealth]
\draw[fill,gray!60] (0,0)--(3.6,0)--(2.3,2.08)--(.5,-.8)--(3.1,-.8)--(1.3,2.08)--(0,0);
\draw (0,0)--(3.6,0)--(2.3,2.08)--(.5,-.8)--(3.1,-.8)--(1.3,2.08)--(0,0);
\draw node[] at (1.8,.4){\small{$\<t>$}};
\draw node at (3,.2){\footnotesize $ x$};\draw node at (2.2,1.6){\footnotesize $ x$};
\draw node at (.5,.2){\footnotesize $ y$};\draw node at (1.4,1.6){\footnotesize $ y$};
\draw node at (.9,-.4){\footnotesize $ z$};\draw node at (2.65,-.4){\footnotesize $ z$};
\draw [decorate,decoration={brace,amplitude=5pt},
       xshift=-4pt,yshift=2pt]
   (0,0) -- (1.3,2.08); 
\draw node at (-.3,1.3){\footnotesize $\<0,y,0,t>$};\draw node at (-1,1.8){\Large II}; 
\draw [decorate,decoration={brace,amplitude=5pt},
       xshift=0pt,yshift=-3pt]
   (3.1,-.8)--(.5,-.8);
\draw node at (1.8,-1.3){\footnotesize $\<0,0,z,t>$};\draw node at (1.8,-1.8){\Large III}; 
\draw [decorate,decoration={brace,amplitude=5pt},
       xshift=4pt,yshift=2pt]
   (2.3,2.08)--(3.6,0); 
\draw node at (3.9,1.3){\footnotesize $\<x,0,0,t>$};\draw node at (4.4,1.8){\Large I}; 
\draw node at (3.6,-.5){\Large II'}; \draw node at (.1,-.5){\Large I'};
\draw node at (1.85,2.4){\Large III'};  
 \end{tikzpicture}
\caption{Directions of  changes of the triangle $\< t>$
demand  on components $x, y, z$.} \label{f4}
\end{center}
\end{figure}

\noindent Below we are further examples of the creation of new triangles from the triangle $ \< t > $ (Fig.~\ref{f5}-\ref{f7}).\\
Because some components of Eq. (\ref{7}) are positive (i.e. black) and others are negative (i.e. red),  for simplicity we  omit these colors in Fig. ~\ref{f5}-\ref{f7}.\\
The triangle $ \<t> $ has vertices $ ABC $, and  the triangle $ \<x,y,z,t>$  has vertices $ A'B'C'$.\\
The side $B'C'$ of the triangle $\<x,0,0,t>$ is created by a moving the side $BC$ in the  direction I or I' while $A=A'$ and the sides $A'B'$ and $A'C'$  are lying on the lines containing respectively the sides $AB$ and $AC$ (Fig.~\ref{f5}).\\
The sides $B'C'$ and $A'C'$ of the triangle $\<x,y,0,t>$ are created by a moving respectively  the sides $BC$ and $AC$  while the side $A'B'$  is lying on the line containing the side $AB$ (Fig.~\ref{f6}).\\
The sides created by moving the sides $BC$, $AC$ and $AB$ by values $x,\ y$  and $z$ are blue, yellow and brown respectively.
\begin{figure}[htbp]
\begin{center} 
\begin{tikzpicture}[>=stealth,scale=0.40]

\draw[](4.3,8)--(1,2)--(7.6,2);
\draw[blue,thick](4.3,8)--(7.6,2);
\draw (3.2,6)--(5.4,2);
\draw[->](4.5,3.9)--(5.9,4.8);
\draw node at (4.8,4.6){\footnotesize $x$};
\draw node at (2.9,3.3){\footnotesize $\< t>$};
\draw node at (.4,1.5){\footnotesize $A'=A$};\draw node at (5.3,1.5){\footnotesize $B$};
\draw node at (2.7,6.1){\footnotesize $C$};
\draw node at (7.6,1.5){\footnotesize $B'$};\draw node at (3.9,8.1){\footnotesize $C'$};
\draw node at (3.8,0){\footnotesize $x,\;t>0$};

\draw(12.5,2)--(16.9,2)--(14.7,6)--cycle;
\draw[blue,thick](14.7,2)--(13.6,4);
\draw node at(11.7,1.5){\footnotesize $A'=A$};\draw node at(16.9,1.5){\footnotesize $B$};
\draw node at(14.3,6.2){\footnotesize $C$};
\draw node at(14.6,1.5){\footnotesize $B'$};\draw node at(13.1,4.1){\footnotesize $C'$};
\draw[<-](14.3,3)--(15.6,3.8);\draw node at(14.8,3.8){\footnotesize $x$};
\draw node at(14.7,0){\footnotesize $t>x+t>0$};

\draw[green](21.6,4)--(24.9,4)--(23.25,1);
\draw(29.3,4)--(27.1,8)--(24.9,4)--cycle;
\draw[blue,dashed, thick](21.6,4)--(23.25,1);
\draw node at(25,3.5){\footnotesize $A$};\draw node at(24.7,4.5){\footnotesize $A'$};
\draw node at(29.2,3.5){\footnotesize $B$};\draw node at(21.1,4){\footnotesize $B'$};
\draw node at(26.7,8.2){\footnotesize $C$};\draw node at(23.1,0.5){\footnotesize $C'$};
\draw[<-](22.6,2.6)--(28.1,5.9);\draw node at(23.1,3.3){\footnotesize $x$};
\draw node at(25.8,0){\footnotesize $x<-t<0$};
 \end{tikzpicture}
\caption{The triangle $\triangle A'B'C' = \< x,0,0,t>$ for different $x$.} \label{f5}
\end{center}
\end{figure}

\begin{figure}[htbp]
\begin{center} 
\begin{tikzpicture}[>=stealth,scale=0.40]

\draw(3.3,8)--(0,2)--(6.6,2);
\draw[blue,thick](3.3,8)--(6.6,2);
\draw(2.2,6)--(4.4,2);
\draw[yellow,thick](2.2,2)--(4.4,6);
\draw node at(-0.2,1.5){\footnotesize $A$};\draw node at(4.3,1.5){\footnotesize $B$};
\draw node at(1.8,6.1){\footnotesize $C$};
\draw node at(6.6,1.5){\footnotesize $B'$};\draw node at(5,6.2){\footnotesize $C'$};
\draw node at(2,1.5){\footnotesize $A'$};
\draw[->](3.8,3.5)--(5.2,4.2);\draw node at(4.2,4.1){\footnotesize $x$};
\draw[->](1.4,4.1)--(2.8,3.4);\draw node at(2.2,4.2){\footnotesize $y$};
\draw node at(3.3,0){\footnotesize $x>0,\;t>y+t>0$};

\draw[green](18.6,7)--(12,7)--(15.3,1);
\draw[](16.4,7)--(15.3,7);
\draw[blue,thick](15.85,8)--(17.5,5);
\draw[yellow,thick,dashed](18.6,7)--(15.3,1);
\draw(13.65,4)--(15.85,8);
\draw node at(13.2,3.7){\footnotesize $C$};\draw node at(11.6,7.3){\footnotesize $B$};
\draw node at(15,7.4){\footnotesize $A$};\draw node at(19.1,7.4){\footnotesize $A'$};
\draw node at(16.9,7.4){\footnotesize $B'$};\draw node at(18.1,4.8){\footnotesize $C'$};
\draw[->](12.9,5.6)--(16,7.45);\draw node at(13.5,6.4){\footnotesize $x$};
\draw[->](14.7,5.5)--(16.9,4.3);\draw node at(15.7,5.5){\footnotesize $y$};
\draw node at(14.6,0){\footnotesize $x>-t>0,\;y<0$};
 \end{tikzpicture}
\caption{The triangle $\triangle A'B'C' = \< x,y,0,t>$ for different $x,\; y$.} \label{f6}
\end{center}
\end{figure}

\begin{figure}[H]
\begin{center} 
\begin{tikzpicture}[>=stealth,scale=0.50]

\draw[green](.55,9)--(3.3,4);
\draw(3.85,9)--(0,2);
\draw(2.2,6)--(4.4,2);
\draw[brown,thick,dashed](.55,9)--(6.05,9);
\draw[blue,thick](2.85,9)--(4.4,6);\draw[blue,thick](4.4,6)--(6.6,2);
\draw[yellow,thick,dashed](4.4,6)--(6.05,9);\draw[yellow,thick](2.2,2)--(4.4,6);
\draw(0,2)--(6.6,2);
\draw node at(-.1,1.65){\footnotesize $A$};\draw node at(4.4,1.6){\footnotesize $B$};
\draw node at(1.75,6){\footnotesize $C$};
\draw node at(0.2,9.3){\footnotesize $B_z$};\draw node at(2.7,9.4){\footnotesize $B'$};
\draw node at(6.7,1.6){\footnotesize $B_x$};\draw node at(4.9,6.05){\footnotesize $C'$};
\draw node at(2,1.6){\footnotesize $A_y$};\draw node at(3.9,9.35){\footnotesize $A_z$};
\draw node at(6.2,9.4){\footnotesize $A'$};
\draw node at(3.75,3.95){\footnotesize $C_y$};\draw node at(3.8,8){\footnotesize $C_x$}; 

\end{tikzpicture}
\caption{The triangle $\triangle A'B'C' = \<x,y,z,t>$ \; for $x>0,\;t>y+t>0,\;$ \mbox{$z<-t<0$ }
where \mbox{$\triangle AB_xC_x=\< x,0,0,t>$},\; $\triangle BC_yA_y=\< 0,y,0,t>, \;
 \triangle CA_zB_z=\<0,0,z,t>$,\;
 $\triangle A_yB_xC'=\<x,y,0,t>$,\; $\triangle A'B_zC_y=\<0,y,z,t>$,\; $\triangle A_zB'C_x=\< x,0,z,t>$.} \label{f7}
\end{center}
\end{figure}
\noindent Let us consider Eq. (\ref{7}) for  following  numbers.
\begin{align}
  \<-1> = \<2,-2,-4,3 > =&\ \< 2,-2,0,3> + \<2,0,-4,3> +\<0,-2,-4,3> \nonumber \\
  						&\ - \<2,0,0,3> -\<0,-2,0,3>-\<0,0,-4,3> \nonumber\\
  						&\ +\<0,0,0,3>.  \label{8}
\end{align}

\noindent In Fig. ~\ref{f8} we can see individual components of Eq. (\ref{8}).
 The first (black) components are the sum $\< 2,-2,0,3> + \<2,0,-4,3> +\<0,-2,-4,3> +\<0,0,0,3>$,\\
 the second (red) components are the sum $- \<2,0,0,3> -\<0,-2,0,3>-\<0,0,-4,3>$ and the right side of the equation is  the  result $ \<2,-2,-4,3 >=\<-1>$.\\
  Numbers 2 staying on the triangles means an overlapping two  triangles. \\
 
 \begin{figure}[H]
\begin{center} 
\begin{tikzpicture}[>=stealth,scale=0.55]
\draw[fill,gray!60](3.3,4)--(4.4,4)--(3.85,3)--cycle;
\draw[green](3.3,4)--(4.4,4)--(3.85,3);
\draw[fill,gray!60](2.2,4)--(1.1,4)--(1.65,3)--cycle;
\draw[fill,gray!60](0,0)--(5.5,0)--(2.75,5)--cycle;
\draw[](0,0)--(5.5,0)--(2.75,5)--cycle; 
\draw[green](2.2,4)--(1.1,4)--(1.65,3);
\draw(2.2,0)--(3.85,3)--(3.3,4)--(2.2,4)--(1.65,3)--(3.3,0);
\draw[->](.3,1)--(1.5,1);
\draw node at(-1,1){\footnotesize $\<0,0,0,3>$};
\draw[->](1.1,3)--(2.5,3);
\draw node at(-.7,3){\footnotesize $\<0,-2,-4,3>$};
\draw[->](1,5)--(2.8,4.3);
\draw node at(-.6,5){\footnotesize $\<2,0,-4,3>$};
\draw[->](4.8,-.3)--(3.9,1);
\draw node at(6.3,-.5){\footnotesize $\<2,-2,0,3>$};
\draw node at (2.75,.4){\footnotesize 2}; 

\draw node at(6.5,2.5){$+$};
\draw[fill,red!30](10.2,4)--(9.1,4)--(9.65,3)--cycle;
\draw[green](10.2,4)--(9.1,4)--(9.65,3);
\draw[fill,red!30](8,0)--(13.5,0)--(10.75,5)--cycle; 
\draw[red](8,0)--(13.5,0)--(10.75,5)--cycle; 
\draw[red](10.2,0)--(10.75,1)--(11.3,0);
\draw node at(10.7,2){\footnotesize $-\<2,0,0,3>$};
\draw[->](8.7,4.5)--(9.8,3.7);
\draw node at(6.9,4.6){\footnotesize $-\<0,0,-4,3>$};
\draw[->](11.4,-.3)--(10.6,.2);
\draw node at(12.8,-.5){\footnotesize $-\<0,-2,0,3>$};
\draw node at (10.75,.5){\footnotesize 2};

\draw node at(14.5,2.5){$=$}; 
\draw[green](16,0)--(21.5,0)--(18.75,5)--cycle; 
\draw[green](18.2,4)--(17.1,4)--(17.65,3);
\draw[fill,gray!60](19.3,4)--(20.4,4)--(19.85,3)--cycle;
\draw[green](19.3,4)--(20.4,4)--(19.85,3);
\draw[->](19.6,4.9)--(19.9,3.7);
\draw node at(17.5,5.3){\footnotesize $\<2,-2,-4,3>=\<-1>$};
\end{tikzpicture}
\caption{} \label{f8}
\end{center}
\end{figure}

\noindent A simplified diagram of Eq. (\ref{f8}) can be seen in Fig. \ref{f9}.

\begin{figure}[H]
\begin{center} 
\begin{tikzpicture}[>=stealth,scale=0.55]
\draw[](0,0)--(5.5,0)--(2.75,5)--cycle; 
\draw[blue](5.5,0)--(2.75,5);
\draw(1.1,4)--(3.3,0);
\draw[yellow,thick](2.2,0)--(4.4,4);
\draw[brown, thick](1.1,4)--(4.4,4);
\draw node at(1.5,1){\footnotesize $\<3>$};
\draw[->](.9,3)--(2.5,3);
\draw node at(-.7,3){\footnotesize $\<0,-2,-4,3>$};
\draw[->](1,5)--(2.8,4.3);
\draw node at(-.5,5){\footnotesize $\<2,0,-4,3>$};
\draw[->](4.8,-.3)--(3.9,1);
\draw node at(6.3,-.5){\footnotesize $\<2,-2,0,3>$};
\draw[->](3.6,4.9)--(3.9,3.7);
\draw node at(3.6,5.3){\footnotesize $\<-1>$};
\end{tikzpicture}
\caption{} \label{f9}
\end{center}
\end{figure}

\noindent From the below equation
\[
\< -1>=\<-1,-1,-1,2>=3\<0>-3\< 1>+\<2>
\]
we  can get the way  of building of the triangle $\<-1>$.
Fig. \ref{f10} shows each step of  the construction of $\<-1>$.
\begin{figure}[H]
\begin{center} 
\begin{tikzpicture}[>=stealth,scale=0.8]

\draw[fill,gray!60](0,0.8)--(2,0.8)--(1,2.4)--cycle;
\draw(0,0.8)--(2,0.8)--(1,2.4)--cycle;\draw(1,0.8)--(1.5,1.6)--(0.5,1.6)--cycle;
\draw node at(1,0){$\< 2>$};
\draw[fill,gray!60](4.3,0.8)--(3.8,1.6)--(4.8,1.6)--cycle;
\draw[green](4.3,0.8)--(3.8,1.6)--(4.8,1.6)--cycle;
\draw[fill,red](4.3,.8)circle(1.5pt);
\draw[fill,red](3.8,1.6)circle(1.5pt);
\draw[fill,red](4.8,1.6)circle(1.5pt);
\draw node at(4.4,0){$\< 2>-3\<1>$};
\draw[fill,gray!60](8.5,1.6)--(9,0.8)--(9.5,1.6)--cycle;
\draw[green](8.5,1.6)--(9,0.8)--(9.5,1.6)--cycle;
\draw[fill,green](8.5,1.6)circle(1.5pt);
\draw[fill,green](9,.8)circle(1.5pt);
\draw[fill,green](9.5,1.6)circle(1.5pt);
\draw node at(9.2,0){$\< 2>-3\< 1>+3\<0>=\<-1> $};
\end{tikzpicture}
\caption{Stages of  the construction of the triangle $\<-1>$.}
\label{f10}
\end{center}
\end{figure}
\noindent Similarly,  from the relation
\[ \<-x>=\<-x,-x,-x,2x>= 3\< 0>-3\< x>+\<2x>\]
we can see that for each $x \in \mathbb{R},\; x>0$ the triangle  $ \< -x>$   is the opened triangle.\\

\section{The equation true in the geometric sense.}

\noindent Let us consider Eq. (\ref{7}) for  concrete numbers
\begin{align}
  \<4> = \<1,1,2,0 > =&\ \<1,1,0,0>+\<1,0,2,0>+\<0,1,2,0> \nonumber\\
  					  &\ -\<1,0,0,0>-\<0,1,0,0>-\<0,0,2,0>+\<0,0,0,0> \nonumber \\
  					 = &\ \<2>+2\< 3> - 2\<1> - \<2> +\<0>. \label{9}
\end{align}
After reduction we will get
\begin{align}
    \<4> = \<1,1,2,0 > &= 2\< 3>  -2\<1>+\<0>. \label{10}
\end{align}
\noindent From the arithmetic point of view Eq.  (\ref{10})
 is true. But it easy to see that we can  not build the triangle $\<4> $
 using only two  triangles $\<3> $,  two  triangles  $-\<1> $ and point $\<0>$. We need the triangles $\<2>  $ and $-\< 2 >$ too. They are not reducible to the empty set because they do not lie on top of each other   (Fig. \ref{f11}).\\
\begin{figure}[H]
\begin{center}
\begin{tikzpicture}[>=stealth,scale=0.7]

\draw[brown,thick](0,0)--(4.4,0);
\draw[blue,thick](4.4,0)--(2.2,4);
\draw[yellow,thick](2.2,4)--(0,0);
\draw(1.1,0)--(2.75,3);
\draw(3.3,0)--(1.65,3);
\draw(1.1,2)--(3.3,2);
\draw node at(1.65,2.3){\footnotesize$\<1>$};
\draw node at(2.2,0.5){\footnotesize$\<2>$};
\draw node at(2.75,2.3){\footnotesize$\<1>$};
\draw node at(1.2,1.1){\footnotesize$\<3>$};
\draw node at(3.2,1.1){\footnotesize$\<3>$};
\draw node at(2.2,3){\footnotesize$\<2>$};
\draw[->](0,1.5)--(2.1,1.95);
\draw node at(-.4,1.5){\footnotesize$\<0>$}; 

\draw[fill,gray!60](9.1,2)--(8,0)--(12.4,0)--(11.3,2)--cycle;
\draw(9.1,2)--(8,0)--(12.4,0)--(11.3,2);
\draw(9.1,0)--(10.2,2);
\draw(11.4,0)--(10.2,2);
\draw[green,thick](9.1,2)--(11.3,2);
\draw[fill,green](9.1,2)circle(1.5pt);
\draw[fill,green](11.3,2)circle(1.5pt);
\draw[fill](10.2,2)circle(1.5pt);
\draw node at(10.2,0.5){\footnotesize $2$};
\end{tikzpicture}
\caption{A geometric interpretation of Eq. (\ref{9}) and Eq. (\ref{10}).}
\label{f11}
\end{center}
\end{figure} 
\noindent But if we  put Eq. (\ref{9}) in the following equation
\begin{align}
    \<6> = \<2,2,2,0 > &= 3\< 4>  -3\<2>+\<0>, \label{102}
\end{align}
we get
\begin{align}
    \<6> = &\ 2\< 4> +2\<3> +\<2>-4\<2>-2\<1>+2\<0> \nonumber\\
    	= &\ 2\< 4> +2\<3> -3\<2>-2\<1>+2\<0>.	 \label{104}
\end{align}
We could reduce  $\<2>-\<2>$ in  Eq. (\ref{104}) because after reduction we can still build the  triangle $\<6>$ from the remaining elements (Fig. \ref{f11a}).\\
Let us note that $\<2>$ and $-\<2>$ lie on top of each other but $\<2>$ is from equation Eq. (\ref{9}) and $-\<2>$ is from equation Eq. (\ref{102})

\begin{figure}[H]
\begin{center}
\begin{tikzpicture}[>=stealth,scale=1.2]

\draw[fill,gray!60](1,0)--(0,0)--(1,1.72)--(1.5,.86)--cycle;
\draw(1,0)--(0,0)--(1,1.72)--(1.5,.86);
\draw[green,thick](1,0)--(1.5,.86);
\draw[fill,green](1.5,.86)circle(1.5pt);
\draw[fill,green](1,0)circle(1.5pt);

\draw[fill,gray!60](1.25,2.15)--(1.75,3.01)--(2.75,1.29)--(1.75,1.29)--cycle;
\draw(1.25,2.15)--(1.75,3.01)--(2.75,1.29)--(1.75,1.29);
\draw[green,thick](1.25,2.15)--(1.75,1.29);
\draw[fill,green](1.25,2.15)circle(1.5pt);
\draw[fill,green](1.75,1.29)circle(1.5pt);
\draw[dashed,gray](1.75,1.29)--(2.25,2.15);

\draw[fill,gray!60](3,.86)--(3.5,0)--(1.5,0)--(2,.86)--cycle;
\draw(3,.86)--(3.5,0)--(1.5,0)--(2,.86);
\draw[green,thick](3,.86)--(2,.86);
\draw[dashed,gray](2,0)--(2.5,.86)--(3,0);
\draw[fill](2.5,1.05)circle(1.5pt);
\draw[fill](1.75,1)circle(1.5pt);
\draw[fill,red](2.5,.86)circle(1.5pt);
\draw[fill,green](3,.86)circle(1.5pt);
\draw[fill,green](2,.86)circle(1.5pt);

\draw[->](.9,2.2)--(1.65,1.1);
\draw node at(.7,2.4){\footnotesize$\<0>$}; 
\draw[->](3.6,1.05)--(2.6,1.05);
\draw node at(3.9,1.05){\footnotesize$\<0>$}; 
\draw[->](3.9,.4)--(2.5,.4);
\draw node at(5,.4){\footnotesize$2\<3>-\<2>-2\<1>$}; 
\draw[->](3.3,1.9)--(2.1,1.9);
\draw node at(4,1.9){\footnotesize$\<4> -\<2>$}; 
\draw[->,gray](3.6,1.5)--(2.2,1.5);
\draw node at(4.2,1.5){\scriptsize$\<2> -\<2>$}; 
\draw[->](0,.6)--(.9,.6);
\draw node at(-.65,.6){\footnotesize$\<4> -\<2>$};
\end{tikzpicture}
\caption{The elements of Eq. (\ref{104}) which build the triangle $\<6>$.}
\label{f11a}
\end{center}
\end{figure}

\noindent So it makes sense to introduce the following definitions.
\begin{definition}
The equation $\< x> = \sum_j\alpha_j\<x_j>$, where $\alpha_j \in \{-1,1\}$ is true in the geometric sense if we can build the triangle $\< x>$ from the elements $\alpha_j\< x_j>$. \label{d2.1}
\end{definition}

\begin{definition}
The equation $\< x> = \sum_j\alpha_j\<x_j>$, where $\alpha_j \in \{-1,1\}$ is true in the arithmetic sense if the equations  $ x^i  = \sum_j\alpha_j x^i_j$, where $\alpha_j \in \{-1,1\}$,  $i=0,1,2$ \label{d2.2}
hold.
\end{definition}

\begin{corol}
If the equation is true in the geometric sense, it is also true in the arithmetic sense.
\end{corol}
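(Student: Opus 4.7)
The plan is to exploit the orthogonal form \eqref{1} of every triangle $\langle m\rangle = m^2 A_2 + m A_1$ and to argue that a \emph{geometric} realization forces the coefficients of $A_1$ and $A_2$ to match on both sides. Since the basis elements $A_2$ and $A_1$ are orthogonal (as already noted after \eqref{1}), equality in $\mathbb{P}_2(\mathbb{Z})$ reduces to two scalar identities, which are exactly the arithmetic equations $n^i = \sum_j \alpha_j n_j^i$ for $i=1,2$.

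First I would fix the interpretation of the two coefficients given in the excerpt right after Figures~\ref{f2}, \ref{f3}: the coefficient at $A_2$ counts the net number of signed unit triangles $\langle \pm 1 \rangle$ occupying the figure, and the coefficient at $A_1$ records the difference of the lengths of the two families of parallel sides (measured in units of the side of $\langle 1 \rangle$). Both are additive under disjoint union and change sign under the $-$ operation, because placing a $-\langle n_j\rangle$ on top of $\langle n \rangle$ deletes exactly $n_j^2$ unit triangles and shortens the relevant parallel-side count by $n_j$, in full agreement with $-\langle n_j\rangle = -n_j^2 A_2 - n_j A_1$.

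Now suppose $\langle n \rangle = \sum_j \alpha_j \langle n_j \rangle$ holds in the geometric sense. Summing the contributions of the pieces to the unit-triangle count gives
\[
\sum_j \alpha_j n_j^2 = n^2,
\]
and summing their contributions to the parallel-side-length difference gives
\[
\sum_j \alpha_j n_j = n.
\]
These are precisely the arithmetic conditions of Definition~1.3 for $i=2$ and $i=1$, so the equation is true in the arithmetic sense.

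The only delicate point I would need to be careful about is what exactly a ``geometric realization'' permits: the $-\langle n_j\rangle$ pieces are meant to cancel $\langle n_j\rangle$ regions already present in a partial sum, not to produce phantom area. Once this is fixed (the $-$ operation is the empty-set producing one, as defined just before \eqref{2}), the area/unit-triangle count and the signed side-length bookkeeping are genuine additive invariants, and the two coefficient identities follow immediately without further computation.
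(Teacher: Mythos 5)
Your proof is correct and follows exactly the reasoning the paper leaves implicit: the corollary is stated without proof, but the preceding discussion of the coefficients of $A_2$ (net count of unit triangles) and $A_1$ (difference of parallel side lengths) as additive geometric invariants is precisely the argument you spell out. Nothing in your write-up departs from the paper's intended route.
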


\noindent You can see that  Eq. (\ref {9}) is true in the  geometric sense, while Eq. (\ref {10}) only in the arithmetic sense.\\

\noindent  Fig.~\ref{f12}  shows that for $x,y,z,t>0$  Eq. (\ref{7}) is true in the geometric sense.
 
\begin{figure}[H]
   \begin{center}
\begin{tikzpicture}[>=stealth,scale=0.4]   
\draw(0.55,1)--(7.15,1);\draw(2.2,0)--(4.95,5);\draw(4.4,0)--(2.2,4);
\draw[brown,thick](0,0)--(7.7,0);
\draw[blue,thick](7.7,0)--(3.85,7);
\draw[yellow,thick](3.85,7)--(0,0);

\draw node at(-0.2,0.5){\footnotesize $z$};
\draw node at(3.2,-0.6){\footnotesize $z+t$}; 
\draw node at(7.9,0.5){\footnotesize $z$};
\draw node at(1,-0.6){\footnotesize $y$}; 
\draw node at(0.2,2.4){\footnotesize $y+t$};
\draw node at(4.8,6.2){\footnotesize $y$};
\draw node at(6,-0.6){\footnotesize $x$};
\draw node at(2.5,5.5){\footnotesize $x$};
\draw node at(7.4,2.7){\footnotesize $x+t$};
\draw node at(4,1.5){\footnotesize $t$ };
\end{tikzpicture}
\caption{Interpretation of Eq. (\ref{7}) for $x,y,z,t>0$.}
\label{f12}
\end{center}
\end{figure}
\noindent We want to prove that Eq. (\ref{7}) is true in the geometric sense $\forall x,y,z,t \in \mathbb{R}$.

\noindent Let's transform  Eq. (\ref{7}).  
\begin{align}\begin{split}
    \<x,y,0,t>&= \< x,y,z,t> + \<x,0,0,t> + \< 0,y,0,t>   \\
             &\quad - \<x,0,z,t> -\< 0,y,z,t> - \< 0,0,0,t > +  \< 0,0,z,t >. 
                 \label{11}
 \end{split}   \end{align}
 Eq. (\ref{11}) does not describe the transition from  the triangle $\<t>$  to the triangle \mbox{$\<x+y+t>$} becouse it  contains terms with the variable $z$.
 Eq. (\ref{11})  is equivalent to the following  Eq. (\ref{12}) and describes the transition from  the triangle $\<z+t>$  to the  triangle $\<x+y+t>$.
\begin{align}\begin{split}
    \<x,y,-z,z+t>&= \< x,y,0,z+t> + \<x,0,-z,z+t> + \< 0,y,-z,z+t>   \\
             &\quad - \<x,0,0,z+t> -\< 0,y,0,z+t> - \< 0,0,-z,z+t >\\
             & \quad +\< 0,0,0,z+t > . 
                 \label{12}
\end{split}    \end{align} 
Just in  Eq. (\ref{7}), the triangle $ \<x,y,0,t>$  becomes the triangle $ \<x,y,-z,z+t>$. 
  
\noindent Similarly Eq. (\ref{13})  describes the transition from  the triangle $\<y+z+t>$  to the  triangle $\<x+t>$ and Eq. (\ref{14})  describes the transition from  the triangle $\<x+y+z+t>$ to the triangle $\<t>$.
\begin{align}\begin{split}
\<x+t>=& \<x,-y,-z,y+z+t>\\
      =& \< x,-y,0,y+z+t> + \<x,0,-z,y+z+t> + \< 0,-y,-z,y+z+t>   \\
             & - \<x,0,0,y+z+t> -\< 0,-y,0,y+z+t> - \< 0,0,-z,y+z+t >\\
             &  +\< 0,0,0,y+z+t > . 
                 \label{13}
\end{split}    \end{align} 
\begin{align}\begin{split}
\<t>=& \<-x,-y,-z,x+y+z+t>\\
      =& \< -x,-y,0,x+y+z+t> + \<-x,0,-z,x+y+z+t>\\
       & + \< 0,-y,-z,x+y+z+t> - \<-x,0,0,x+y+z+t>\\
       & -\< 0,-y,0,x+y+z+t> - \< 0,0,-z,x+y+z+t >\\
        & +\< 0,0,0,y+z+t > . 
                 \label{14}
\end{split}    \end{align} 

\noindent Below we will give two proofs of the fundamental theorem on the addition of triangles.\\

 \begin{thm}
      $\forall x,y,z,t \in \mathbb{R}$ Equation (\ref{7}) is true in the geometric sense. \label{t2.4}
\end{thm}
\begin{proof}
\noindent If in Eq. (\ref{7}) one element, for example,  $z$ is negative we  can replace \mbox{Eq.  (\ref{7})} with \mbox{Eq. (\ref{12})}.\\
If in Eq. (\ref{7}) two elements  $y,z$ are negative we  can replace \mbox{Eq.  (\ref{7})} with \mbox{Eq. (\ref{13})}.\\
If  Eq. (\ref{7}) has three elements  $x,y,z$ negative we  will take Eq. (\ref{14}).\\
So it is sufficient to consider the  cases $x>0,\, y>0,\, z>0$ and any $t$.
We have 10 following  cases.
\begin{enumerate}
\item[(1)] $t >0$, (Fig.~\ref{f12})\\
\text{In next cases  $t<0$}.
\item[(2)] $x+t>0,\, y+t>0,\, z+t>0$, (Fig.~\ref{f13}).
\item[(3)] $x+t>0,\, y+t>0,\, z+t<0$, (Fig.~\ref{f14}).
\item[(4)] $x+t>0,\, y+t<0,\, z+t<0,\, y+z+t>0$, (Fig.~\ref{f15}).
\item[(5)] $x+t>0,\, y+z+t<0$, (Fig.~\ref{f16}).
\item[(6)] $x+t<0,\, y+t<0,\, z+t<0,\, x+y+t>0,\, y+z+t>0,\, x+z+t>0$, (Fig.~\ref{f17}).
\item[(7)] $x+t<0,\, x+y+t>0,\, y+z+t<0,\, x+z+t>0$, (Fig.~\ref{f18}).
\item[(8)] $ x+y+t>0,\, y+z+t<0,\, x+z+t<0$, (Fig.~\ref{f19}).
\item[(9)] $x+y+t<0,\,y+z+t<0,\,x+z+t<0,\,x+y+z+t>0$, (Fig.~\ref{f20}).
\item[(10)] $x+y+z+t<0$, (Fig.~\ref{f21}).
\end{enumerate}
The proof is based on reviewing each figure and founding that by using the components of Eq. (~\ref {7}), we always get the  triangle
$\<x+y+z+t>$ from the triangle $\<t>$.
In Figs. \ref{f13}-\ref{f21} the  triangle $\<x+y+z+t>$ is denoted by  $\<t' >$.
\begin{figure}[htbp]
\begin{minipage}[t]{.45\textwidth}
 \begin{center}
 \begin{tikzpicture}[>=stealth,scale=0.4] 
 \draw[brown,thick](0,0)--(7.7,0);
\draw[blue,thick](7.7,0)--(3.85,7);
\draw[yellow,thick](3.85,7)--(0,0);  
 
\draw(3.3,0)--(5.5,4);
\draw(2.2,4)--(4.4,0);
\draw(1.65,3)--(6.05,3);
\draw [decorate,decoration={brace,amplitude=5pt},
       xshift=4pt,yshift=2pt]
  (4,7.1)--(7.9,0.1); 
\draw node at(7.2,4){\footnotesize $\<t'>$};
\draw node at(3.85,2.25){\footnotesize $\<t>$};
\end{tikzpicture}
\caption{$x+t>0,\, y+t>0,\, z+t>0.$}
\label{f13}
\end{center}
\end{minipage}
\hfill
\begin{minipage}[t]{.45\textwidth}
 \begin{center}
 \begin{tikzpicture}[>=stealth,scale=0.4]
 \draw[brown,thick](0,1)--(7.7,1);
\draw[blue,thick](7.7,1)--(3.85,8);
\draw[yellow,thick](3.85,8)--(0,1);  

\draw(1.1,3)--(6.6,3);
\draw(1.65,4)--(3.85,0)--(6.05,4);
\draw [decorate,decoration={brace,amplitude=5pt},
       xshift=4pt,yshift=2pt]
  (4,8.1)--(7.9,1.1); 
 \draw node at(7.2,5){\footnotesize $\<t'>$};
\draw node at(3.9,2){\footnotesize $\<t>$};
 
\end{tikzpicture}
\caption{$x+t>0,\, y+t>0,\,z+t<0.$}
\label{f14}
\end{center}
\end{minipage}
\end{figure}

\begin{figure}[htbp]
\begin{minipage}[t]{.46\textwidth}
 \begin{center}
\begin{tikzpicture}[>=stealth,scale=0.5]
 \draw[brown,thick](0,1)--(5.5,1);
\draw[blue,thick](5.5,1)--(2.75,6);
\draw[yellow,thick](2.75,6)--(0,1); 
\draw(3.85,4)--(1.65,0)--(0,3)--(4.4,3);
\draw [decorate,decoration={brace,amplitude=5pt},
       xshift=4pt,yshift=2pt]
  (2.9,6.1)--(5.6,1.1); 
 \draw node at(5.3,4){\footnotesize $\<t'>$}; 
 \draw node at(1.7,1.9){\footnotesize $\<t>$};
\end{tikzpicture}
\caption{$x+t>0,\; y+t<0,$\; \mbox{$ z+t<0,$}$\;y+z+t>0.$}
\label{f15}
\end{center}
\end{minipage}
\hfill
\begin{minipage}[t]{.45\textwidth}
 \begin{center}
 \begin{tikzpicture}[>=stealth,scale=0.6]
  \draw[brown,thick](1.55,2)--(5.95,2);
\draw[blue,thick](5.95,2)--(4.3,5);
\draw[yellow,thick](2.1,1)--(4.3,5); 

\draw(5.4,3)--(1,3)--(2.65,0)--(4.85,4);
\draw [decorate,decoration={brace,amplitude=5pt},
       xshift=4pt,yshift=2pt]
  (4.4,5.1)--(6.1,2.1);
  \draw node at(6.1,4){\footnotesize $\<t'>$}; 
\draw [decorate,decoration={brace,amplitude=5pt},
       xshift=4pt,yshift=2pt]
 (2.35,-0.2)--(0.65,2.9);
\draw node at(1,1.1){\footnotesize $\<t>$};
\end{tikzpicture}
\caption{$x+t>0,\; y+z+t<0.$}
\label{f16}
\end{center}
\end{minipage}
\end{figure}

\begin{figure}[H]
\begin{minipage}[t]{.45\textwidth}
\begin{center}
 \begin{tikzpicture}[>=stealth,scale=0.3]
 \draw[brown,thick](3,3)--(9.6,3);
\draw[blue,thick](9.6,3)--(6.3,9);
\draw[yellow,thick](3,3)--(6.3,9); 
\draw(3,7)--(9.6,7)--(6.3,1)--(3,7)--cycle;
\draw [decorate,decoration={brace,amplitude=5pt},
       xshift=4pt,yshift=2pt]
  (8.3,9.9)--(11.6,4);
  \draw node at(11.5,7.8){\footnotesize $\<t'>$}; 
\draw [decorate,decoration={brace,amplitude=5pt},
       xshift=4pt,yshift=2pt]
 (4.2,-.1)--(.9,5.8);
\draw node at(1.3,2.5){\footnotesize $\<t>$};
\end{tikzpicture}
\caption{    
$x+t<0,$ \mbox{$ y+t<0,$} \mbox{$z+t<0,$}\mbox{$ x+y+t>0$}, \mbox{$y+z+t>0,$} \mbox{$x+z+t>0.$}}
\label{f17}
\end{center}
\end{minipage}
\hfill
\begin{minipage}[t]{.46\textwidth}
 \begin{center}
 \begin{tikzpicture}[>=stealth,scale=0.4]
 \draw[brown,thick](2.1,3)--(6.5,3);
\draw[blue,thick](6.5,3)--(4.85,6);
\draw[yellow,thick](4.85,6)--(2.65,2); 
 \draw(1,5)--(6.5,5)--(3.75,0)--cycle;
\draw [decorate,decoration={brace,amplitude=5pt},
       xshift=4pt,yshift=2pt]
 (3.3,-0.1)--(0.55,4.9);
\draw node at(.9,2){\footnotesize $\<t>$};

\draw node at(4.8,4){$\<t'>$};
\end{tikzpicture}
\caption{$x+t<0,\, x+y+t>0,$ \mbox{ $y+z+t<0$},\;$x+z+t>0$.}
\label{f18}
\end{center}
\end{minipage}
\end{figure}

\begin{figure}[H]
\begin{minipage}[t]{.45\textwidth}
\begin{center}
 \begin{tikzpicture}[>=stealth,scale=0.5]
\draw(1,5)--(6.5,5)--(3.75,0)--cycle;
 \draw[brown,thick](1.55,4)--(5.95,4);
\draw[blue,thick](3.85,6)--(5.4,3);
\draw[yellow,thick](2.1,3)--(3.85,6); 
\draw [decorate,decoration={brace,amplitude=5pt},
       xshift=4pt,yshift=2pt]
 (3.4,-0.2)--(0.65,4.9);
\draw node at(1.3,2){\footnotesize $\<t>$};
\draw node at(3.8,4.5){\footnotesize $\<t'>$};

\end{tikzpicture}
\caption{$ x+y+t>0,$ \mbox{$y+z+t<0,$} $x+z+t<0.$}
\label{f19}
\end{center}
\end{minipage}
\hfill
\begin{minipage}[t]{.45\textwidth}
 \begin{center}
 \begin{tikzpicture}[>=stealth,scale=0.4]
 \draw[brown,thick](2.65,4)--(7.15,4);
\draw[blue,thick](4.3,7)--(6.5,3);
\draw[yellow,thick](3.2,3)--(5.4,7); 
\draw(1,7)--(8.7,7)--(4.85,0)--cycle;
\draw [decorate,decoration={brace,amplitude=5pt},
       xshift=4pt,yshift=2pt]
 (4.5,-0.2)--(0.65,6.9);
\draw node at(1.6,2.9){\footnotesize $\<t>$};
\draw node at(4.85,4.65){\footnotesize $\<t'>$};
\end{tikzpicture}
\caption{$x+y+t<0,$ \mbox{$y+z+t<0,$} $x+z+t<0, \;x+y+z+t>0.$}
\label{f20} 
\end{center}
\end{minipage}
\end{figure}

\begin{figure}[H]
\begin{center}
\begin{tikzpicture}[>=stealth,scale=0.5]
\draw(1,5)--(6.5,5)--(3.75,0)--cycle;
 \draw[brown,thick](1.55,4)--(5.95,4);
\draw[blue,thick](2.1,5)--(4.3,1);
\draw[yellow,thick](3.2,1)--(5.4,5); 
\draw [decorate,decoration={brace,amplitude=5pt},
       xshift=4pt,yshift=2pt]
 (3.4,-0.2)--(0.65,4.9);
\draw node at(1.3,2){\footnotesize $\<t>$};
\draw node at(3.75,3.4){\footnotesize $\<t'>$};
\end{tikzpicture}
\caption{$x+y+z+t<0.$}
\label{f21}
\end{center}
\end{figure}

\noindent One can show that the case (8)  can be replaced by the case (3). \\
Indeed, if we replace Eq. (\ref{7}) by  equivalent Eq. (\ref{14}) then
$\<t>$ acts as \\ $\< t'> = \<x+y+z+t>$ and $\<z+t> = \<-x,-y,0,(x+y+z+t)>$ acts as $\<x+y+t>$.
In the case (3) only $t'$ and $x+y+t$ are positive and in the case (8) only $t$ and $l+t$ are negative. By changing the sign in all components of the case (8) we will receive the case (3).\\

\noindent  Similarly we can ignore cases (7), (9) and (10), which are equivalent to cases (4), (2) and (1) respectively. And so we have 6 different cases represented by 6 different figures.

\noindent In Figs. ~\ref{f22}-~\ref{f27} we can see individual terms of Eq. (\ref{7}) for cases (1)-(6).
 The  first (black) component is the sum $\<x+y+t> + \<x+z+t> +\<y+z+t>$,
 the second (red) component is the sum $-\<x+t> - \<y+t> -\<z+t>$, the third component is  $\<t> $ and the right side of the equation is  the  triangle $\<x+y+z+t>$. Numbers 2 or 3 staying on the triangles mean an overlapping two or three triangles.

\begin{figure}[H]
\begin{center}
\begin{tikzpicture}[>=stealth,scale=0.7]
\draw[fill,gray!60](0,0)--(4,0)--(2,3.2)--cycle;
\draw[thick](0,0)--(4,0)--(2,3.2)--cycle;
\draw(1,0)--(2.5,2.4)--(3.5,0.8)--(0.5,0.8)--(1.5,2.4)--(3,0);
\draw node at(2,1.1){3};\draw node at(1.3,1.3){2};
\draw node at(2.7,1.3){2};\draw node at(2,.4){2};

\draw node at(4,1.1){$+$};
\draw[fill,red!30](5,0)--(6.5,2.4)--(7.5,0.8)--(4.5,0.8)--(5.5,2.4)--(7,0)--cycle;
\draw[red,thick](5,0)--(6.5,2.4)--(7.5,0.8)--(4.5,0.8)--(5.5,2.4)--(7,0)--cycle;
\draw node at(6,1.1){3};

\draw node at(8,1.1){$+$};
\draw[fill,gray!60](8.5,0.8)--(9.5,0.8)--(9,1.6)--cycle;
\draw[thick](8.5,0.8)--(9.5,0.8)--(9,1.6)--cycle;

\draw node at(10,1.1){$=$};
\draw[fill,gray!60](10,0)--(14,0)--(12,3.2)--cycle;
\draw[thick](10,0)--(14,0)--(12,3.2)--cycle;
\end{tikzpicture}
\caption{Equation (\ref{7}) for $x>0,\; y>0,\; z>0,\; t >0$.}
\label{f22}
\end{center}
\end{figure}

\begin{figure}[H]
\begin{center}
\begin{tikzpicture}[>=stealth,scale=0.7]
\draw[fill,gray!60](0,0)--(3,0)--(1.5,2.4)--cycle;
\draw[fill,gray!60](2,0)--(5,0)--(3.5,2.4)--cycle;
\draw[fill,gray!60](1,1.6)--(4,1.6)--(2.5,4)--cycle;

\draw[thick](0,0)--(5,0)--(2.5,4)--cycle;
\draw[thick](2,0)--(3.5,2.4)--(4,1.6)--(1,1.6)--(1.5,2.4)--(3,0)--cycle;
\draw node at(1.5,1.9){2};
\draw node at(3.5,1.9){2};
\draw node at(2.5,0.3){2};

\draw node at(5.1,1.1){$+$};
\draw[fill,red!30](5.5,1.6)--(6.5,1.6)--(6,2.4)--cycle;
\draw[red,thick](5.5,1.6)--(6.5,1.6)--(6,2.4)--cycle;
\draw[fill,red!30](7.5,1.6)--(8.5,1.6)--(8,2.4)--cycle;
\draw[red,thick](7.5,1.6)--(8.5,1.6)--(8,2.4)--cycle;
\draw[fill,red!30](6.5,0)--(7.5,0)--(7,0.8)--cycle;
\draw[red,thick](6.5,0)--(7.5,0)--(7,0.8)--cycle;

\draw node at(9.1,1.1){$+$};

\draw[fill,gray!60](9.5,1.6)--(10.5,1.6)--(10,0.8)--cycle;
\draw[thick,green](9.5,1.6)--(10.5,1.6)--(10,0.8)--cycle;

\draw node at(10.8,1.1){$=$};
\draw[fill,gray!60](11,0)--(16,0)--(13.5,4)--cycle;
\draw[thick](11,0)--(16,0)--(13.5,4)--cycle;
\end{tikzpicture}
\caption{Equation (\ref{7}) for $x+t>0,\; y+t>0,\; z+t>0,\; t<0$.}
\label{f23}
\end{center}
\end{figure}

\begin{figure}[H]
\begin{center}
\begin{tikzpicture}[>=stealth,scale=0.7]
\draw[fill,gray!60](0,0.8)--(2,0.8)--(1,2.4)--cycle;
\draw[fill,gray!60](3,0.8)--(5,0.8)--(4,2.4)--cycle;
\draw[fill,gray!60](0.5,1.6)--(4.5,1.6)--(2.5,4.8)--cycle;
\draw[thick](0,0.8)--(2,0.8)--(1,2.4)--cycle;
\draw[thick](3,0.8)--(5,0.8)--(4,2.4)--cycle;
\draw[thick](0.5,1.6)--(4.5,1.6)--(2.5,4.8)--cycle;
\draw node at(1,1.9){2};
\draw node at(4,1.9){2};

\draw node at(5.3,1.1){$+$};
\draw[fill,red!30](5.5,1.6)--(6.5,1.6)--(6,2.4)--cycle;
\draw[red,thick](5.5,1.6)--(6.5,1.6)--(6,2.4)--cycle;
\draw[fill,red!30](8.5,1.6)--(9.5,1.6)--(9,2.4)--cycle;
\draw[red,thick](8.5,1.6)--(9.5,1.6)--(9,2.4)--cycle;
\draw[fill,red!30](7,0.8)--(8,0.8)--(7.5,0)--cycle;
\draw[green,thick](7,0.8)--(8,0.8)--(7.5,0)--cycle;

\draw node at(9.9,1.1){$+$};
\draw[fill,gray!60](10.2,1.6)--(12.2,1.6)--(11.2,0)--cycle;
\draw[thick,green](10.2,1.6)--(12.2,1.6)--(11.2,0)--cycle;

\draw node at(12.5,1.1){$=$};
\draw[fill,gray!60](13,0.8)--(18,0.8)--(15.5,4.8)--cycle;
\draw[thick](13,0.8)--(18,0.8)--(15.5,4.8)--cycle;
\end{tikzpicture}
\caption{Equation (\ref{7}) for $x+t>0,\; y+t>0,\; z+t<0,\; t<0$.}
\label{f24}
\end{center}
\end{figure}

\begin{figure}[H]
\begin{center}
\begin{tikzpicture}[>=stealth,scale=0.7]
\draw[fill,gray!60](0,0.8)--(1,0.8)--(0.5,1.6)--cycle;
\draw[fill,gray!60](2,0.8)--(5,0.8)--(2.5,4.8)--(1,2.4)--(3,2.4)--cycle;
\draw[thick](0,0.8)--(1,0.8)--(0.5,1.6)--cycle;
\draw[thick](2,0.8)--(5,0.8)--(2.5,4.8)--(1,2.4)--(4,2.4)--(3.5,3.2)--cycle;
\draw node at(3.5,2.7){2};

\draw node at(5.2,1.5){$+$};
\draw[fill,red!30](5.5,2.4)--(6.5,2.4)--(6,1.6)--cycle;
\draw[thick,green](5.5,2.4)--(6.5,2.4)--(6,1.6)--cycle;
\draw[fill,red!30](6.5,0.8)--(7.5,0.8)--(7,0)--cycle;
\draw[thick,green](6.5,0.8)--(7.5,0.8)--(7,0)--cycle;
\draw[fill,red!30](8.5,2.4)--(9.5,2.4)--(9,3.2)--cycle;
\draw[red,thick](8.5,2.4)--(9.5,2.4)--(9,3.2)--cycle;

\draw node at(9.9,1.5){$+$};
\draw[fill,gray!60](10.2,2.4)--(13.2,2.4)--(11.7,0)--cycle;
\draw[thick,green](10.2,2.4)--(13.2,2.4)--(11.7,0)--cycle;

\draw node at(13.3,1.5){$=$};
\draw[fill,gray!60](13.5,0.8)--(18.5,0.8)--(16,4.8)--cycle;
\draw[thick](13.5,0.8)--(18.5,0.8)--(16,4.8)--cycle;
\end{tikzpicture}
\caption{Equation (\ref{7}) for $x+t>0,\; y+t<0,\; z+t<0,\; t<0$.}
\label{f25}
\end{center}
\end{figure}

\begin{figure}[H]
\begin{center}
\begin{tikzpicture}[>=stealth,scale=0.7]
\draw[fill,gray!60](0,1.6)--(1,1.6)--(0.5,0.8)--cycle;
\draw[thick,green](0,1.6)--(1,1.6)--(0.5,0.8)--cycle;
\draw[fill,gray!60](2,1.6)--(4,1.6)--(2.5,4)--(1.5,2.4)--(2.5,2.4)--cycle;
\draw[thick](2,1.6)--(4,1.6)--(2.5,4)--(1.5,2.4)--(3.5,2.4)--(3,3.2)--cycle;
\draw node at(3,2.7){2};

\draw node at(4.55,1.8){$+$};
\draw[fill,red!30](5,2.4)--(7,2.4)--(6.5,1.6)--(7.5,1.6)--(6.5,0)--cycle;
\draw[red,thick](5.5,1.6)--(6.5,1.6); \draw[red,thick](6.5,1.6)--(6,.8);
\draw[thick,green](5,2.4)--(7,2.4)--(6.5,1.6)--(7.5,1.6)--(6.5,0)--cycle;
\draw[fill,red!30](8,2.4)--(9,2.4)--(8.5,3.2)--cycle;
\draw[red,thick](8,2.4)--(9,2.4)--(8.5,3.2)--cycle;
\draw node at(6,1.3){2};

\draw node at(9.4,1.8){$+$};
\draw[fill,gray!60](9.7,2.4)--(12.7,2.4)--(11.2,0)--cycle;
\draw[thick,green](9.7,2.4)--(12.7,2.4)--(11.2,0)--cycle;

\draw node at(13.1,1.8){$=$};
\draw[fill,gray!60](13.5,0.8)--(18.5,0.8)--(16,4.8)--cycle;
\draw[thick](13.5,0.8)--(18.5,0.8)--(16,4.8)--cycle;
\end{tikzpicture}
\caption{Equation (\ref{7}) for $x+t>0,\; y+z+t<0,\; t<0$.}
\label{f26}
\end{center}
\end{figure}

\begin{figure}[H]
\begin{center}
\begin{tikzpicture}[>=stealth,scale=0.7]
\draw[fill,gray!60](0,0.8)--(1,0.8)--(0.5,1.6)--cycle;
\draw[thick](0,0.8)--(1,0.8)--(0.5,1.6)--cycle;
\draw[fill,gray!60](2,0.8)--(3,0.8)--(2.5,1.6)--cycle;
\draw[thick](2,0.8)--(3,0.8)--(2.5,1.6)--cycle;
\draw[fill,gray!60](1,2.4)--(2,2.4)--(1.5,3.2)--cycle;
\draw[thick](1,2.4)--(2,2.4)--(1.5,3.2)--cycle;

\draw node at(3.5,1.5){$+$};
\draw[fill,red!30](4,2.4)--(5,2.4)--(4.5,1.6)--cycle;
\draw[thick,green](4,2.4)--(5,2.4)--(4.5,1.6)--cycle;
\draw[fill,red!30](6,2.4)--(7,2.4)--(6.5,1.6)--cycle;
\draw[thick,green](6,2.4)--(7,2.4)--(6.5,1.6)--cycle;
\draw[fill,red!30](5,0.8)--(6,0.8)--(5.5,0)--cycle;
\draw[thick,green](5,0.8)--(6,0.8)--(5.5,0)--cycle;

\draw node at(7.5,1.5){$+$};
\draw[fill,gray!60](8,2.4)--(11,2.4)--(9.5,0)--cycle;
\draw[thick,green](8,2.4)--(11,2.4)--(9.5,0)--cycle;

\draw node at(11.5,1.5){$=$};
\draw[fill,gray!60](12,0.8)--(15,0.8)--(13.5,3.2)--cycle;
\draw[thick](12,0.8)--(15,0.8)--(13.5,3.2)--cycle;
\end{tikzpicture}
\caption{Equation (\ref{7}) for $x+t<0,\, y+t<0,\, z+t<0,\, x+y+t>0,$  \mbox{$ y+z+t>0,$} $\;x+z+t>0,\; t<0$.}
\label{f27}
\end{center}
\end{figure}

\noindent Thus, in all 6 cases covering the full space of real inputs, the triangle $\<x+y+z+t>$ can be constructed from $\<t>$ using the given identity. Therefore, Eq. (\ref{7}) holds geometrically for all real $x,\ y,\ z,\ t$.
 \end{proof}
 
 \noindent So the result of the equation (\ref{7}) is always a triangle.\\
 
\noindent 
Now we will present a second, more general proof of Theorem ~\ref{t2.4}, but first we need to make some preparations.\\

\noindent Let us note that all 8 triangles $T_j$, $j=1,2,\ldots,8$ of equation ~(\ref{7}) arise from the intersection of three pairs of parallel lines. Indeed, the three lines $a_1, b_1, c_1$ form the triangle $<t>$, while the other three lines $a_2, b_2, c_2$, parallel respectively to $a_1, b_1, c_1$, form the triangle $<x,y,z,t>$. The sides of the remaining triangles of equation ~(\ref{7}) lie on other combinations of the lines $a_i, b_i, c_i$, where $i=1,2$.
If three non-parallel lines intersect at a single point, then some of the triangles of equation ~(\ref{7}) degenerate into points.\\

\noindent Let us assume that the triangles $\<x>$ are closed for $x>0$
 and open for $x<0$.\\
Each of the eight triangles is either indivisible or consists of smaller polygons that are themselves indivisible. We call these smallest indivisible polygons (including indivisible triangles) cells (atoms). If a polygon shares an edge with a closed triangle, we regard that edge as belonging to the triangle, not to the polygon. If a polygon shares an edge with an open triangle, we regard that edge as belonging to the polygon. If a polygon is a part of a closed triangle, we consider its remaining sides to be closed. If a polygon is a part of an open triangle, we consider its remaining sides to be open.

\noindent Let us count the number of the cells.\\ 
Two pairs of parallel lines $a_1, b_1, a_2, b_2$ form a parallelogram (Fig.~\ref{f121}).
Through two vertices of the parallelogram we draw the axis $l_1$,
 and through the two remaining vertices of the parallelogram we draw the axes $l_2$ and $l_3$, parallel to the axis $l_1$.
Then the three pairs of parallel lines 
 form three cells $C_1, C_2, C_3$ containing the axis $l_1$,  two cells $C_4, C_5$ containing the axis $l_2$ and two cells $C_6, C_7$ containing the axis  $l_3$ (Fig.~\ref{f121}).

 \begin{figure}[H]
   \begin{center}
\begin{tikzpicture}[>=stealth,scale=0.6]  
\draw(-.55,-1)--(4.4,8);\draw(8.25,-1)--(3.3,8); 
\draw(-1,0)--(8.7,0);
\draw(-1,3)--(8.7,3);\draw(2.15,-1)--(7.1,8);
\draw(4.95,-1)--(0,8);
\draw[red,dashed,thick](3.95,8)--(3.45,-1);
\draw[green,dashed,thick](2.3,6)--(1.9,-.5);\draw[green,dashed,thick](5.3,6)--(4.95,-.5);

\draw node at(3,8.3){\footnotesize $a_1$};
\draw node at(-.1,8.3){\footnotesize $a_2$}; 
\draw node at(4.9,8.3){\footnotesize $b_1$};
\draw node at(7.4,8.3){\footnotesize $b_2$}; 
\draw node at(9.3,0){\footnotesize $c_1$};
\draw node at(9.3,3){\footnotesize $c_2$};
\draw node at(4,8.5){\footnotesize $l_1$};
\draw node at(2.4,6.5){\footnotesize $l_2$};
\draw node at(5.4,6.5){\footnotesize $l_3$};
\draw node at(4.5,4.5){\footnotesize $C_1$};
\draw node at(3.6,2.5){\footnotesize $C_2$};
\draw node at(3.8,.5){\footnotesize $C_3$};
\draw node at(2.3,3.3){\footnotesize $C_4$};
\draw node at(1.3,1){\footnotesize $C_5$};
\draw node at(5.5,3.35){\footnotesize $C_6$};
\draw node at(6,1){\footnotesize $C_7$};
\end{tikzpicture}
\caption{7 cells between three pairs of lines.}
\label{f121}
\end{center}
\end{figure}

\begin{proof}[Proof of Theorem 2.4]
Each of the eight triangles of equation (\ref{7}) is a uniquely defined sum of a subset of the seven atoms.  
We construct the matrix
\[
M \in \{0,1\}^{7 \times 8},
\]
where $M_{i,j}=1$ if the cell $C_i$ contributes to the triangle $T_j$, and $0$ otherwise.  
The rank of $M$ is at most $7$. Hence, every triangle is a rational linear combination of the other triangles.  

\noindent Thus, there exist rational coefficients $A,a,b,c,d,e,f,g$ such that the following equation holds in the geometric sense:
\begin{align}\begin{split}
  A \<x,y,z,t>&= a\< x,y,0,t> + b\<x,0,z,t> + c\< 0,y,z,t>  \\
             &\quad - d\<x,0,0,t> - e\< 0,y,0,t> - f\< 0,0,z,t> 
              + g\< 0,0,0,t>. 
      \label{141}        
\end{split}\end{align}

\noindent From this it follows that
\begin{align}\begin{split}
  A (x+y+z+t)^2 &= a(x+y+t)^2 + b(x+z+t)^2 + c(y+z+t)^2  \\
             &\quad - d(x+t)^2 - e(y+t)^2 - f(z+t)^2 
              + g t^2.
      \label{142}       
\end{split}\end{align}

\noindent We also know that equation(\ref{7}) is true in  algebraic sense. Thus,
\begin{align}\begin{split}
   (x+y+z+t)^2 &= (x+y+t)^2 + (x+z+t)^2 + (y+z+t)^2  \\
             &\quad - (x+t)^2 - (y+t)^2 - (z+t)^2 
              + t^2.
      \label{143}        
\end{split}\end{align}

\noindent Let us compute $(\ref{141}) - a(\ref{142})$:
\begin{align}\begin{split}
(A-a)(x+y+z+t)^2 &= (b-a)(x+z+t)^2 + (c-a)(y+z+t)^2  \\
             &\quad - (d-a)(x+t)^2 - (e-a)(y+t)^2 \\
             &\quad - (f-a)(z+t)^2 + (g-a)t^2.
       \label{144}       
\end{split}\end{align}

\noindent On the left-hand side of (\ref{144}) we have the term $(A-a)2xy$, but it does not appear on the right-hand side. Therefore, $A=a$.  
\noindent By a similar argument, we conclude that $A=b=c$.  

\noindent We thus obtain
\begin{align*}\begin{split}
  A (x+y+z+t)^2 &= A(x+y+t)^2 + A(x+z+t)^2 + A(y+z+t)^2  \\
             &\quad - d(x+t)^2 - e(y+t)^2 - f(z+t)^2 
              + g t^2.
\end{split}\end{align*}

\noindent But then  
\[
2Axt = 4Axt - 2dxt,
\]
which implies $A=d$.  
Similarly,  $A=e=f$.  
Finally, we also obtain $A=g$.  

\noindent Therefore, equation~(\ref{141}), true in the geometric sense, takes the form
\begin{align*}\begin{split}
A \<x,y,z,t>&= A\< x,y,0,t> + A\<x,0,z,t> + A\< 0,y,z,t>  \\
             &\quad - A\<x,0,0,t> - A\< 0,y,0,t> - A\< 0,0,z,t> 
              + A\< 0,0,0,t>,
  \end{split}\end{align*}
for $A \neq 0$. This completes the proof.
\end{proof}

\begin{rem}
Let us note that only in the case (6) no triangles of the same sign do not overlap (Fig. ~\ref{f27}).
\end{rem} 
\begin{corol}
      $\forall x,y,z,t \in \mathbb{R}$\\
       If $x+y+z=0$, then the triangles     $\<t>$ and  $\<x,y,z,t>$ from Eq.  (\ref{7}) are congruent.
\end{corol}

\noindent We can see that for  $x+y+z=0$ the expression $ \<x,y,z, \_ >  $   can be treated as a translation vector.  

\begin{thm}
 For every two triangles $ABC$ and  $A'B'C'$ with respective parallel sides there exist numbers $x,y,z,t \in \mathbb{R}$ such that\\
 $\quad \bigtriangleup ABC=\< t >, \qquad \bigtriangleup A'B'C'=\< x,y,z,t >$.
\end{thm}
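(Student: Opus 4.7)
The plan is to interpret $\triangle A'B'C'$ as the triangle whose three sides lie on three lines obtained from the sides of $\triangle ABC$ by parallel translation, and to read off $n,k,l,t$ from this data. The definition $\langle r\rangle=r^2A_2+rA_1$ for $r\in\mathbb{R}$, introduced just before the theorem, lets me choose $t\in\mathbb{R}$ so that $\triangle ABC=\langle t\rangle$: the magnitude $|t|$ is the ratio of the side of $\triangle ABC$ to that of $\langle 1\rangle$, and the sign of $t$ records whether $\triangle ABC$ has the orientation of $\langle 1\rangle$ or that of $\langle -1\rangle$.

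Next, I would label the three lines carrying the sides of $\triangle ABC$ as $\ell_A,\ell_B,\ell_C$ (opposite to $A,B,C$ respectively) and the parallel lines carrying the sides of $\triangle A'B'C'$ as $\ell_A',\ell_B',\ell_C'$. I then define $n,k,l\in\mathbb{R}$ to be the signed perpendicular shifts $\ell_A\to\ell_A'$, $\ell_B\to\ell_B'$, $\ell_C\to\ell_C'$, with positive sign meaning outward from $\triangle ABC$, each rescaled by the reciprocal of the height of $\langle 1\rangle$ over the corresponding side (so that a unit shift corresponds exactly to adding one copy of $\langle 1\rangle$ along that side).

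With these choices the geometric construction from the proof of Theorem~\ref{t2.1}, which by the remark preceding this theorem is valid for real parameters, starts from $\langle t\rangle$ and produces $\langle n+k+l+t\rangle$ precisely by translating $\ell_A,\ell_B,\ell_C$ outward by the amounts $n,k,l$. The resulting triangle therefore has its three sides on $\ell_A',\ell_B',\ell_C'$, and since any triangle with sides parallel to a given one is uniquely determined by its three supporting lines, it must coincide with $\triangle A'B'C'$. Hence $\triangle A'B'C'=\langle n+k+l+t\rangle$, as required.

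The main obstacle that Theorem~\ref{t2.1} already handles is that the construction really delivers the triangle bounded by the three translated lines for every sign combination of $n,k,l$; the ten-case verification in that proof disposes of this. What remains here is essentially a dimension count: the three free real parameters $(n,k,l)$ match the three degrees of freedom (planar position and signed size) of a triangle with prescribed parallel sides relative to the fixed $\triangle ABC$, so the correspondence $(n,k,l)\mapsto\triangle A'B'C'$ is surjective and the required quadruple $n,k,l,t$ always exists.
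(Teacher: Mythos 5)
Your proposal is correct and follows the same route as the paper, whose entire proof is the single sentence that the claim ``follows from the geometric construction of the operation (\ref{6})''; you have simply made explicit what that appeal means (reading $t$ off from the size and orientation of $\triangle ABC$, and $n,k,l$ from the signed parallel shifts of the three side-lines, so that the construction's output is the triangle bounded by the three translated lines, hence $\triangle A'B'C'$). No change of method, just a welcome filling-in of the details the paper leaves implicit.
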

\begin{proof}
The proof follows from the geometric construction of Eq.  (\ref{7}).
 \end{proof}
\begin{remark}
To get the triangle $ \< x,y,z,t>$ from the triangle $ \<  t>$ we can successively  build triangles
 $ \<x,0,0, t>$,  $ \< 0,y,0,(x+t)>$, \mbox{$ \< 0,0,z,(x+y+t)>$}.
\end{remark} 
 
\noindent Let us note that the Eq. (\ref{7}) is also true in the geometric sense.\\
\noindent For $n= \frac{1}{2}$  Eq. (\ref{7}) will take the following form.
 \begin{equation*}
 \<\frac{1}{2}>=\frac{3}{8}\<1>+\frac{3}{4}\<0>-\frac{1}{8}\<-1>.
 \end{equation*}
We will not have to look for an interpretation of the number $\<\frac{1}{2}>$ if we find that
\begin{equation*}
 \<1>=\frac{3}{8}\<2>+\frac{3}{4}\<0>-\frac{1}{8}\<-2>.
 \end{equation*}
 Just the number $\<\frac{1}{2}>$  is a triangle with a side $\frac{1}{2}$.\\
Similarly for each real number $x \in \mathbb{R}$ the notation\\
\begin{align}
  \<x>=\frac{x(x+1)}{2}\<1>-(x^2-1)\<0>+\frac{x(x-1)}{2}\<-1>
      \label{15}
\end{align}
  is another form of the  triangle $\<x>=(x^2,x,1)$ with a side $x$. 
 
\section{Multiplication in the set $\mathbb{R}_2$.}

\noindent Let us fix a point in $\mathbb{R}_2$ and denote it as $\<0,0,0>$. Then, using Eq. (\ref{7}), every element of the set $\mathbb{R}_2$ can be obtained from the distinguished element $\<0,0,0>$ as an element $\<x,y,z,0>$. We will mark it $\<x,y,z>$.\\
For the element $\<t>=\<t_x,t_y,t_z,0>=\<t_x,t_y,t_z> $
 Eq. (\ref{7}) takes the following form
 
 \begin{align}\begin{split}
  \forall x,y,z,t \in \mathbb{R}  \\
   \<x,y,z,t>&= \< x+t_x,y+t_y,z+t_z> \\
  &= \< x+t_x,y+t_y,t_z> + \<x+t_x,t_y,z+t_z> + \< t_x,y+t_y,z+t_z>  \\
             &\quad - \<x+t_x,t_y,t_z> -\< t_x,y+t_y,t_z> - \< t_x,t_y,z+t_z > \\
             &\quad + \< t_x,t_y,t_z >.    \label{221}
 \end{split}   \end{align}

\noindent It is easy to see that the elements $\<x,y,z>$ can be interpreted as the points of the space $\mathbb{R}^3$.\\
So all triangles from   $\mathbb{R}_2$, satisfying the condition $x+y+z=t$ for a fixed $t$, correspond to a plane in $\mathbb{R}^3$.\\
Let us also observe that the elements of Eq. (\ref{7}) form the vertices of a rectangular cuboid in $\mathbb{R}^3$.\\

\subsection{Multiplication in the set $\mathbb{R}_2$}

\noindent We know that the set $\mathbb{R}_2$ is closed under multiplication. So what does multiplication of elements $\<x,y,z> \in \mathbb{R}_2$ look like? Let us assume that all components of $\<x,y,z> $ are equivalent,  multiplication is associative and commutative, and multiplication is distributive over addition with respect to the components.  Then the following equation must hold.  
\begin{align}
     \Big(\<1,0,0>^2\Big)\<0,1,0>=\<1,0,0>\Big(\<1,0,0>\<0,1,0>\Big).
     										\label{222}
 \end{align}   

\noindent Let us suppose that \\
$\<1,0,0>^2= \<1-2a,a,a>, \qquad \<1,0,0>\<0,1,0>=\<c,c,1-2c>$.\\
Then
\begin{align*}\begin{split}
 \Big(\<1,0,0>^2\Big)\<0,1,0>=&\ \<1-2a,a,a>\<0,1,0>  \\
        = &\ \<1-2a,0,0>\<0,1,0> +\<0,a,0>\<0,1,0> +\<0,0,a>\<0,1,0>\\
        = &\ \<(1-2a)c,(1-2a)c,(1-2a)(1-2c)>\\ 
          &\ +\<a^2,a(1-2a),a^2>\\
          &\ +\<a(1-2c),ac,ac>\\
        = &\ \<a+a^2-4ac+c,a-2a^2-ac+c,a^2-2a+5ac-2c+1>,
 \end{split}   \end{align*}
\noindent and
\begin{align*}\begin{split}
\<1,0,0>\Big(\<1,0,0>\<0,1,0>\Big) =&\ \<1,0,0>\<c,c,1-2c>  \\
        = &\ \<1,0,0>\<c,0,0> +\<1,0,0>\<0,c,0>\\
          &\ +\<1,0,0>\<0,0,1-2c>\\
        = &\ \<(1-2a)c,ac,ac>\\ 
          &\ +\<c^2,c^2,c(1-2c)>\\
          &\ +\<c(1-2c),(1-2c)^2,c(1-2c)>\\
        = &\ \<2c-2ac-c^2,ac-4c+5c^2+1,2c+ac-4c^2>. 
 \end{split}   \end{align*}
 
 \noindent According to Eq. (\ref{222}) we get
 \begin{multline}
 \<a+a^2-4ac+c,a-2a^2-ac+c,a^2-2a+5ac-2c+1>\\
 = \<2c-2ac-c^2,ac-4c+5c^2+1,2c+ac-4c^2>.
     										\label{223}
 \end{multline}  
 \noindent If we compare the first components of the elements in the Eq. (\ref{223}), we get two cases\\
 $a=c=\frac{1}{3} \quad$ or $\quad a=c+1$.\\
  The first case appears to be of little significance, therefore we shall consider the second case.\\
 Let us compare the second components of the elements in the Eq. (\ref{223}) and put $\quad a=c+1$.\\
 We get $\quad a=-\frac{1}{3}, \qquad c= \frac{2}{3}$.\\
 
\noindent It is easy to check that  the obtained values satisfy  Eq.(\ref{223}) for the third components. So
\begin{align*}
     \<1,0,0>^2=\Big\langle\frac{5}{3},-\frac{1}{3},-\frac{1}{3}\Big\rangle, \qquad	
    \<1,0,0>\<0,1,0>=\Big\langle\frac{2}{3},\frac{2}{3},-\frac{1}{3}\Big\rangle.	 		
 \end{align*}   
We assumed that all components of $\<x,y,z> $ are equivalent therefore
\begin{align*}
     \<0,1,0>^2 & =\Big\langle-\frac{1}{3},\frac{5}{3},-\frac{1}{3}\Big\rangle, & \qquad	
     \<0,0,1>^2=\Big\langle-\frac{1}{3},-\frac{1}{3},\frac{5}{3}\Big\rangle, \\[0.2cm]
    \<1,0,0>\<0,0,1> & =\Big\langle\frac{2}{3},-\frac{1}{3},\frac{2}{3}\Big\rangle, & \qquad     
    \<0,1,0>\<0,0,1>=\Big\langle-\frac{1}{3},\frac{2}{3},-\frac{2}{3}\Big\rangle. 
\end{align*} 
It is easy to verify that the following equation holds for the calculated values of a and c.
\begin{align*}
     \Big(\<1,0,0>\<0,1,0>\Big)\<0,0,1>
               =\<1,0,0>\Big(\<0,1,0>\<0,0,1>\Big)
               =\Big\langle\frac{1}{3},\frac{1}{3},\frac{1}{3}\Big\rangle.              \end{align*}  
   
\noindent Let us find a general formula for multiplication for arbitrary $x,y,z,a,b,c \in \mathbb{R}$.
\begin{align}\begin{split}
\<x,y,z>\<a,b,c> =&\
    \Big\langle\frac{5xa}{3},-\frac{xa}{3},-\frac{xa}{3}\Big\rangle
  +\Big\langle\frac{2xb}{3},\frac{2xb}{3},-\frac{xb}{3}\Big\rangle     
  +\Big\langle\frac{2xc}{3},-\frac{xc}{3},\frac{2xc}{3}\Big\rangle\\[0.1cm]
 & +\Big\langle\frac{2ya}{3},\frac{2ya}{3},-\frac{ya}{3}\Big\rangle
  +\Big\langle-\frac{yb}{3},\frac{5yb}{3},-\frac{yb}{3}\Big\rangle
  +\Big\langle-\frac{yc}{3},\frac{2yc}{3},\frac{2yc}{3}\Big\rangle\\[0.1cm]
 & +\Big\langle\frac{2za}{3},-\frac{za}{3},\frac{2za}{3}\Big\rangle
  +\Big\langle-\frac{zb}{3},\frac{2zb}{3},\frac{2zb}{3}\Big\rangle
  +\Big\langle-\frac{zc}{3},-\frac{zc}{3},\frac{5zc}{3}\Big\rangle\\[0.1cm]
     = &\ \Big\langle x(a+b+c)+\frac{(x+y+z)(2a-b-c)}{3},\\
            & \qquad\qquad   y(a+b+c)+\frac{(x+y+z)(-a+2b-c)}{3},\\
       & \qquad \qquad \qquad \qquad z(a+b+c)+\frac{(x+y+z)(-a-b+2c)}{3}\Big\rangle\\[0.1cm]
  = &\ \Big\langle \frac{(5x-y-z)(a+b+c)+(x+y+z)(5a-b-c)}{6},\\
            & \qquad \frac{(-x+5y-z)(a+b+c)+(x+y+z)(-a+5b-c)}{6},\\
   & \qquad \qquad \frac{(-x-y+5z)(a+b+c)+(x+y+z)(-a-b+5c)}{6}\Big\rangle            .  \label{224}
 \end{split}   \end{align} 

\noindent Depending on the context, we will use one of the above multiplication formulas.\\
Now we can verify that the associative property of multiplication holds for any elements of the set $\mathbb{R}_2$.

 \begin{thm}
\begin{align*} 
 \forall x,y,z,a,b,c,A,B,C \in \mathbb{R} \qquad\qquad\\
 \Big(\<x,y,z>\<a,b,c>\Big)\<A,B,C>&\ = \<x,y,z>\Big(\<a,b,c>\<A,B,C>\Big)
 \end{align*}
 \end{thm}
 \begin{proof}
Becouse  all components of $\<x,y,z> $ are equivalent it is enough to verify associativity on first components of $\<x,y,z>$.\\
Let us denote
\[\<x,y,z>\<a,b,c>=\<f,g,h>,\]
where
\begin{align*}
f &= \frac{(5x-y-z)(a+b+c)+(x+y+z)(5a-b-c)}{6}, \\
g &= \frac{(-x+5y-z)(a+b+c)+(x+y+z)(-a+5b-c)}{6},\\
h &= \frac{(-x-y+5z)(a+b+c)+(x+y+z)(-a-b+5c)}{6}.
\end{align*}
Then
\begin{multline*}
\Big(\<x,y,z>\<a,b,c>\Big)\<A,B,C>  = \<f,g,h>\<A,B,C>\\
  = \Big\langle \frac{(5f-g-h)(A+B+C)+(f+g+h)(5A-B-C)}{6},\quad ,\quad                         
     \Big\rangle\\
\end{multline*}
We have 
\begin{align*}\begin{split}
(5f-g-h)& = 5\frac{(5x-y-z)(a+b+c)+(x+y+z)(5a-b-c)}{6}\\
        &\quad - \frac{(-2x+4y+4z)(a+b+c)+(x+y+z)(-2a+4b+4c)}{6}\\ 
        & = (5x-y-z)(a+b+c)+(x+y+z)(4a-2b-2c)  
\end{split}\end{align*}
and
\[f+g+h= (x+y+z)(a+b+c).\]
So the numerator of the first component of the expression 
$\Big(\<x,y,z>\<a,b,c>\Big)\<A,B,C> $ will take the form
\begin{multline*}
(5f-g-h)(A+B+C)+(f+g+h)(5A-B-C)\\
  =\big[(5x-y-z)(a+b+c) +(x+y+z)(4a-2b-2c)\big](A+B+C) \\
     +(x+y+z)(a+b+c)(5A-B-C)
\end{multline*}
If we substitute the expression $(5x-y-z)$ with $(5A-B-C)$, 
$\<x+y+z> $ with $\<A+B+C>$ and vice versa, the above equation remains unchanged. This shows that multiplication is associtive.\\
 \end{proof}

\noindent From Eq. (\ref{224}) we get
 \begin{multline}
 \<x,y,z>^2 =
  \Big\langle \frac{(x+y+z)(5x-y-z)}{3},\\
  \frac{(x+y+z)(-x+5y-z)}{3},\\
      \frac{(x+y+z)(-x-y+5z)}{3}\Big\rangle .
     										\label{226}
 \end{multline}
 \begin{multline*}
\<x,y,z>^3 
  =  \Big\langle \frac{(x+y+z)^2(7x-2y-2z)}{3},\\
               \frac{(x+y+z)^2(-2x+7y-2z)}{3},\\
               \frac{(x+y+z)^2(-2x-2y+7z)}{3}\Big\rangle .     		\end{multline*}   
\begin{multline*}
\<x,y,z>^4 
  =  \< (x+y+z)^3(3x-y-z),\\
               (x+y+z)^3(-x+3y-z),\\
               (x+y+z)^3(-x-y+3z)> .  
\end{multline*}
\begin{align*}
\<x,y,z>\<z,x,y>\<y,z,x> 
  =  \Big\langle \frac{(x+y+z)^3}{3},
                 \frac{(x+y+z)^3}{3},
                 \frac{(x+y+z)^3}{3}\Big\rangle . 
\end{align*}   
From the equation below
\[ \Big\langle \frac{1}{3},\frac{1}{3},\frac{1}{3}\Big\rangle                  
\<x,y,z> = \<x,y,z>\]
it follows that the element $\big\langle \frac{1}{3},\frac{1}{3},\frac{1}{3}\big\rangle $  is the multiplicative identity with respect to the multiplication defined by Eq. (\ref{224}).\\

\noindent As shown in the next equation
\[\<x,y,-x-y>\<a,b,-a-b>=\<0,0,0>,\]
the product of two points is the  point $\<0,0,0>$.\\
But the product of any point different from $\<0,0,0>$ and any triangle 
\[\<x,y,-x-y>\<a,b,c>=\<x(a+b+c),y(a+b+c),(-x-y)(a+b+c)>\]
gives a point different from the point $\<0,0,0>$.

\noindent Let us examine how the components of the triangle $\<1>$ behave under squaring.

\[\forall a,b \in \mathbb{R} \qquad \<1>^2 =
 \< a,b,1-a-b>^2=\Big\langle 2a-\frac{1}{3},2b-\frac{1}{3},
 \frac{5}{3}-a-b\Big\rangle . \]
 We can see that \\
\[\textrm{ if} \quad  a>\frac{1}{3} \quad\textrm{then}\quad
      2a-\frac{1}{3} > a\]
 and vice versa.
 
\noindent Example.
 \[ \Big\langle \frac{1}{2},\frac{1}{3},\frac{1}{6} \Big\rangle^2=
    \Big\langle \frac{2}{3},\frac{1}{3},0 \Big\rangle, \qquad
    \<2,0,-1>^2= 
    \Big\langle \frac{11}{3},-\frac{1}{3},-\frac{7}{3} \Big\rangle.\]
\noindent
A similar situation  occurs in the multiplication of distinct elements $\<1>$. 
 \begin{align*}
  \forall x,y,a,b \in \mathbb{R}  \qquad\qquad\qquad \qquad &  \\
   \<x,y,1-x-y>\<a,b,1-a-b> &=
   \Big\langle x+a-\frac{1}{3},y+b-\frac{1}{3},
 \frac{5}{3}-x-y-a-b\Big\rangle    
    \end{align*}
Here we have.
\[\textrm{ If} \quad x+a>\frac{1}{3} \quad\textrm{then}\quad
      x+a-\frac{1}{3} > \frac{x+a}{2}\]
 and vice versa.\\
 Let us examine how the components of any triangle $\<x,y,z>$ behave under squaring.\\
 According to Eq.(\ref{226})
  it is easy to see that \\
\[\textrm{ if} \quad  \frac{x}{x+y+z}>\frac{1}{3} \quad\textrm{then}\quad
      \frac{(x+y+z)(5x-y-z)}{3} > x(x+y+z)\]
 and vice versa.\\
 
\noindent \subsection{Roots in the set $\mathbb{R}_2$}

\noindent It is straightforward to derive formulas for the roots of elements 
 $\<x,y,z>$, where $x+y+z \neq 0$.\\
 \begin{align*}
\sqrt{\<x,y,z>} & = 
  \Big\langle\frac{4x+y+z}{6\sqrt{x+y+z}},
  \frac{x+4y+z}{6\sqrt{x+y+z}},
  \frac{x+y+4z}{6\sqrt{x+y+z}}\Big\rangle,\\[0.1cm]
\sqrt{\<x,y,1-x-y>} &= 
  \Big\langle\frac{3x+1}{6},\frac{3y+1}{6},
  \frac{4-3x-3y}{6}\Big\rangle,\\[0.1cm]
\<x,y,1-x-y>^\frac{1}{2^n} &=
    \Big\langle\frac{3x+2^n-1}{3\cdot2^n},\frac{3y+2^n-1}{3\cdot2^n},
     \frac{2^n+2-3x-3y}{3\cdot2^n}\Big\rangle,\\[0.1cm] 
\<1,0,0>^\frac{1}{2^n} &=
    \Big\langle\frac{2^n+2}{3\cdot2^n},\frac{2^n-1}{3\cdot2^n},
     \frac{2^n-1}{3\cdot2^n}\Big\rangle.
\end{align*}
Let us propose two additional possible formulas.\\
\begin{align*}
\<x,y,1-x-y>^\frac{1}{n} &=
    \Big\langle\frac{3x+n-1}{3\cdot n},\frac{3y+n-1}{3\cdot n},
     \frac{n+2-3x-3y}{3\cdot n}\Big\rangle,\\[0.1cm] 
\<1,0,0>^\frac{1}{n} &=
    \Big\langle\frac{n+2}{3\cdot n},\frac{n-1}{3\cdot n},
     \frac{n-1}{3\cdot n}\Big\rangle.
\end{align*}
\subsection{Division in the set $\mathbb{R}_2$}

For all 
$  x,y,z,a,b,c \in \mathbb{R}$, such that  $a+b+c\neq 0 $
 there exist $D,E,F \in \mathbb{R} $ such that
\begin{align}
\<x,y,z>= \<D,E,F>\<a,b,c>, \label{227}
\end{align}
 where\\
 \begin{align*}
  D & =\dfrac{x(a+4b+4c)+(y+z)(-2a+b+c)}{3(a+b+c)^2}\\[0.1cm]
    & = \frac{(2x-y-z)(a+b+c)+(x+y+z)(-a+2b+2c)}{3(a+b+c)^2},    
\end{align*}
\begin{align*}
  E & =\dfrac{y(4a+b+4c)+(x+z)(a-2b+c)}{3(a+b+c)^2}\\[0.1cm]
    & = \frac{(-x+2y-z)(a+b+c)+(x+y+z)(2a-b+2c)}{3(a+b+c)^2},    
\end{align*}
\begin{align*}
  F & =\dfrac{z(4a+4b+c)+(x+y)(a+b-2c)}{3(a+b+c)^2}\\[0.1cm]
    & = \frac{(-x-y+2z)(a+b+c)+(x+y+z)(2a+2b-c)}{3(a+b+c)^2}.    
\end{align*}
So if an element $\<a,b,c>$ is not a point, then it divides 
any triangle $\<x,y,z>$.\\

\noindent When $z=-x-y$ and $a+b+c\neq 0$, then
\[\<D,E,F>= \Big\langle\frac{x}{a+b+c},\frac{y}{a+b+c},
     \frac{-x-y}{a+b+c}\Big\rangle.\]
Let us note another formula.\\
 For all 
$  x,y,z,a,b,c \in \mathbb{R}$, such that  $x+y+z\neq 0, \quad a+b+c\neq 0 $\\
 there exist $\alpha,\beta,\gamma \in \mathbb{R} $ such that
\begin{align*}
\<x,y,z>\<a,b,c>\<\alpha,\beta,\gamma>= 
	\Big\langle \frac{1}{3},\frac{1}{3},\frac{1}{3}\Big\rangle,
\end{align*}   
where\\
\begin{multline*}
\alpha = \frac{-xa+(y+z)(b+c)}{(x+y+z)^2(a+b+c)^2},\\
   \beta = \frac{-yb+(x+z)(a+c)}{(x+y+z)^2(a+b+c)^2},\\
      \gamma = \frac{-zc+(x+y)(a+b)}{(x+y+z)^2(a+b+c)^2}.   
\end{multline*}

\section{Dissection of triangles into triangles}

It is known \cite{Dui,Gam} that  a square can be dissected into at least 21 squares of different sides.
It is also known \cite{Tut1}-\cite{DraHam} that a triangle can be dissected into at least 15  similar triangles of different sides.
At the same, it is considered that the triangles, one of which is a mirror image of the other 
are different.\\
To describe the dissection of triangle, we will use the simplified version of the set  $\mathbb{R}_2$.
Let us take the set  $\mathbb{R}_{02}= \{\pm  \<x >=\pm (x^2,x); x\in\mathbb{R} \}$ with the multiplication
\begin{align*}
(x^2_1,x_1)\cdot (x^2_2,x_2)=\big((x_1x_2)^2,(x_1 x_2)\big)
\end{align*}
and addition (\ref{3}). 
The elements of  $\mathbb{R}_{02}$ are interpreted in the same way as the elements of set  $\mathbb{R}_{2}$, as similar triangles. But elements $(x^2,x)$  describes triangles without an edge.
 
\noindent In Fig. \ref{f37} we have one of two known possible optimum dissection of the triangle. \\
\begin{figure}[H]
\begin{center}
 \begin{tikzpicture}[>=stealth,scale=0.28]

\draw(0,0)--(42.9,0)--(21.45,39)--cycle;
\draw(30.8,0)--(36.8,11)--(24.75,11)--cycle;
\draw(20.35,13)--(17.6,8)--(23.1,8)--cycle;
\draw(24.2,20)--(28.05,13)--(20.35,13)--cycle;
\draw(26.95,11)--(31.9,20)--(11,20)--(22,0)--(26.4,8);
\draw(26.95,11)--(25.85,13)--(23.1,8)--(26.4,8);
\draw node at(10.3,8){$\< 20>$};
\draw node at(21,27){$\< 19>$};
\draw node[green] at(30.3,7){$\<-11>$};
\draw node[green] at(36.4,3.5){$\<11>$};

\draw node at(17.4,15.5){$\<-12>$};
\draw node[magenta] at(24,14.8){$\<7>$};
\draw node[magenta] at(27.9,17.2){$\<-7>$};

\draw node at(24.85,8.9){\scriptsize{$\<3>$}};
\draw node[red] at(25.8,11.45){\scriptsize{$\<2>$}};
\draw node[red] at(26.9,12.4){\scriptsize{$\<\text{-}2>$}};

\draw node at(32,14){$\<9>$};
\draw node[cyan] at(23.2,11.4){\scriptsize{$\<-5>$}};
\draw node[cyan] at(20.3,9.4){\scriptsize{$\<5>$}};
\draw node[blue] at(26,2.5){$\<8>$};
\draw node[blue] at(21.5,5){$\<-8>$};
\end{tikzpicture}
\caption{Dissection of the triangle into  15  different  triangles  }
\label{f37}
\end{center}
\end{figure}

This dissection can be written by using 7 times  Eq. (\ref {7}). \\
\begin{align}\begin{split}
  \<39> &= \<19,12,20,-12> \\
  &= \<19,12,0,-12> +\< 0,12,20,12> + \<19,0,20,-12> \\
  &\quad - \<19,0,0,-12> -\<0,12,0,-12> - \<0,0,20,-12>   +\< 0,0,0,-12> \\
   &= \< 19> +\<20> + \< 27> - \cancel{ \<  7>_1} - \cancel{\<8>_2} + \<-12>, \label{23}
\end{split}\end{align}
where
\begin{equation}
    \<27> = \<11,16,11,-11> =
    \<16_1> +\<11> + \<16_2> -  \cancel{\<5>_3}  + \<-11>,  \label{24}
\end{equation}
where
\begin{equation}
 \<16_1> = \< 7,7,9,-7>
  = \cancel{\<7>_1}+\< 9_1> + \< 9_2> -  \cancel{\<2>_4} + \< -7>, \label{25}
\end{equation}
where
\begin{equation}
  \<9_2> = \< 2,7,2,-2>
   = \<7_1> +\cancel{\<2>_4} + \<7_2> -  \cancel{\<5>_5} + \<-2>, \label{26}
\end{equation}
where
\begin{equation}
  \<7_2> = \<5,5,2,-5>
   = \cancel{\<5>_5} +\<2_1> + \cancel{\<2_2>_7} -
   \cancel{\< -3>_6} + \< -5>, \label{27}
\end{equation}
where
\begin{equation}
 \<16_2> = \<8,8,8,-8>
  = \< 8_1> +\<8_2> + \cancel{\<8_3>_2}  + \<-8>, \label{28}
  \end{equation}
where
  \begin{equation}
 \<8_1> = \<3,5,3,-3>
  = \cancel{\<5_1>_3} +\<5_2> + \< 3> -  \cancel{\<2>_7} +
  \cancel{\<-3>_6}. \label{29}
\end{equation}

\noindent It should be noted that the triangles in order to be reduced have to be of different signs and lie one on the other. The strikethrough triangle $- \cancel{ \<7>_1}$  in Eq. (\ref{23}) means that it reduces with the strikethrough triangle $ \cancel{ \<7>_1}$ in Eq. (\ref{25}). The other strikethrough triangles mean the same.\\
We can see Eqs.(\ref{23})-(\ref{29}) on Fig.~\ref{f38}.\\
The blue triangle $-\<7>$ in the black triangle $\<39>$ means that it reduces with the black triangle $\<7>$ in the blue triangle $\<16>$. Similarly  reduce other triangles.\\
The gray number  $\<27>$ means that the triangle  $\<27>$ will be replaced with a distribution into smaller triangles $ \<27> = \<11,16,11,-11>$. It is similar with the other gray numbers.\\
\begin{figure}[H]
\begin{center}
 \begin{tikzpicture}[>=stealth,scale=0.25]

\draw(2,36.98)--(41,36.98)--(21.5,70.52)--cycle;
\draw(22,36.98)--(12,54.18)--(31,54.18);
\draw(14,36.98)--(27.5,60.2);
\node at(12,42){$\<20>$};\node at(21,60){$\<19>$};
\node at(18,50){$\<-12>$};\node[red] at(17.8,39){$-\<8>$};
\node[cyan] at(27.2,56.5){$-\<7>$};\node[gray] at(27,44){$\<27>$};
\node at(14.8,67){$\< 39>=\<19,12,20,-12>$};

\draw[green](8.5,27.52)--(16.5,13.76)--(22,23.22)--(6,23.22)--cycle;
\draw[green](0.5,13.76)--(27.5,13.76)--(14,36.98)--cycle;
\draw[->](17,33)--(19.1,36.6);
\node[green] at(16,19.8){$\<-11>$};
\node[green] at(21.5,16.5){$\<11>$};
\node[gray] at(13.5,28){$\<16>$};
\node[gray] at(8.7,18){$\<16>$};
\node[orange] at(8.3,24.3){\scriptsize{$-\<5>$}};
\node[green] at(8,33){\small{$\<27>=\<11,16,11,-11>$}};

\draw[red](8.5,0)--(24.5,0)--(16.5,13.76)--cycle;
\draw[red](12.5,6.88)--(20.5,6.88)--(16.5,0)--cycle;
\draw[->](14.5,11)--(13.2,13.3);
\node at(12.5,2.7){$\<8>$};\node[red] at(16.6,4){$\<-8>$};
\node [gray] at(16.5,8.8){$\<8>$};
\node[red] at(20.7,2.7){$\<8>$};
\node[red] at(5.3,0.7){\small{$\< 16>=\< 8,8,8,-8>$}};

\draw[orange](4.5,6.88)--(12.5,6.88)--(8.5,13.76)--cycle;
\draw[orange](6,9.46)--(11,9.46)--(9.5,6.88)--(7,11.18);
\draw[->](11.9,9)--(13.3,9);
\node[blue] at(6.7,9.9){\scriptsize{$\text{-}\<2>$}};
\node[green] at(8.9,10.7){\scriptsize{$\< 5>$}};
\node [orange] at(6.9,7.8){\scriptsize{$\< 5>$}};
\node[blue] at(9.4,8.65){\scriptsize{$\<\text{-}3>$}};
\node[orange] at(11,7.7){\scriptsize{$\<3>$}};
\node[orange] at(3.7,12.4){\small{$\< 8> =\< 3,5,3,-3>$}};

\draw[cyan](22,23.22)--(38,23.22)--(30,36.98)--cycle;
\draw[cyan](29,23.22)--(33.5,30.96)--(26.5,30.96)--(31,23.22)--cycle;
\draw[->](24.8,29)--(19.1,29);
\node[gray] at(26.5,25.5){$\< 9>$};
\node[cyan] at(29.9,28.8){$\<-7>$};
\node[cyan] at(33.5,25.5){$\<9>$};
\node at(30,32.5){$\<7>$};
\node[magenta] at(29.65,23.7){\scriptsize{-$\<2>$}};
\node[cyan] at(34.7,35){\small{$\<16>=\<7,7,9,-7>$}};

\draw[magenta](26.5,15.48)--(35.5,15.48)--(31,23.22)--cycle;
\draw[magenta](27.5,17.2)--(34.5,17.2)--(33.5,15.48)--(30,21.5)--cycle;
\draw[->](28.2,19)--(25.9,23);
\node[gray] at(29.8,16.2){\scriptsize{$\<7>$}};
\node[blue] at(29.7,18.5){\scriptsize{-$\<5>$}};
\node[magenta] at(32.4,18.9){\scriptsize{$\<7>$}};
\node[magenta] at(33.1,16.55){\scriptsize{$\<\text{-}2>$}};
\node[cyan] at(34.5,15.9){\scriptsize{$\<2>$}};
\node[magenta] at(35.5,21.5){\small{$\<9>=\<2,7,2,-2>$}};

\draw[blue](30,9.46)--(37,9.46)--(33.5,15.48)--cycle;
\draw[blue](31,11.18)--(36,11.18)--(33.5,6.88)--cycle;
\draw[->](31.7,13.1)--(30.3,15.3);
\node[orange] at(30.9,9.9){\scriptsize{$\< 2>$}};
\node[magenta] at(33.4,12.2){\scriptsize{$\< 5>$}};
\node[blue] at(33.4,10.2){\scriptsize{$\<\text{-}5>$}};
\node[orange] at(33.5,8.75){\scriptsize{-$\<\text{-}3>$}};
\node[blue] at(35.9,9.9){\scriptsize{$\<2>$}};
\node[blue] at(38.2,14){\small{$\<7>=\<5,5,2,-5>$}};
\end{tikzpicture}
\caption{Scheme  of a reduction  of triangles   }
\label{f38}
\end{center}
\end{figure}

\section{The set  $\mathbb{R}_1$.}

\noindent The set $\mathbb{R}_2$  has its counterpart on the number line.
Let us take the ring\\
 $\mathbb{R}^2=\mathbb{R}\times\mathbb{R}= \{ (x,y) ; \; x,y\in\mathbb{R} \}$ with  additon and multiplication\\
\begin{equation}
(x_1,y_1)+(x_2,y_2)=(x_1+x_2,y_1+y_2), \label{36}
\end{equation}
\begin{equation}
(x_1,y_1)\cdot (x_2,y_2)=(x_1\cdot x_2,y_1\cdot y_2). \label{37}
\end{equation}
Let us consider the subset of the ring $\mathbb{R}^2$, the set $\mathbb{R}_1= \{\pm  \<x >=\pm (x,1); x\in\mathbb{R} \}$.\\
It is closed under multiplication (\ref{37}) but not under addition (\ref{36}).\\
The set $\mathbb{R}_1$ is closed under    the following kind of addition  
 \begin{align}
\forall x,y,t \in \mathbb{R}\qquad
       \<x+y+t> = &\  \< x+t> + \<y+t> - \<t> \qquad
    \label{38}
\end{align} 
because the equations
\begin{align*}
\forall x,y,t \in \mathbb{R}\quad \forall i=1,0\qquad \qquad \\
       (x+y+t)^i = &\  ( x+t )^i +  ( y+t )^i - (t)^i  
   \end{align*}
are true.\\
Later in this article, an element $\<x>$ can be replaced with $-\<x>$ and vice versa.\\

\noindent We can check  that for each $x \in \mathbb{R}$ the following equation holds.
\begin{align}
\forall x \in \mathbb{R}\qquad
       \<x>= x\<1>-(x-1)\<0>.
    \label{39}
\end{align} 
Let us transform Eq. (\ref{39}). 
\begin{align*}
\< x > = &\ x\<1>-(x-1)\<0> =  x\big(\<1>-\<0>\big) + 1\<0>.
\end{align*}
 It is easy to check that 
 the elements  $\< 1> -\<0> = A_1$,  $\<0>=A_0$ are orthogonal. So
 $A_1=(1,0)$, \; $A_0=(0,1)$.\\

\noindent An element $\<x>$ for each $x \in \mathbb{R},\: x>0$  will be interpreted as a one-dimensional vector and marked with a black color  
 \tikz{\draw[->] (0,0)--(1,0);
\draw[fill](0,0)circle(1pt);} \ .\\
An element $-\<x>$ will be interpreted as a one-dimensional vector with a direction opposite to the direction of vector $-\<x>$ and marked with a red color
\tikz{\draw[red,<-] (0,0)--(1,0);
\draw[fill,red](1,0)circle(1pt);} \;.\\  
An element $\<0>$ will be marked as a black point and an element $-\<0>$ will be marked as a red point.\\
To see the result of putting the black triangle on the red one, we will denote the empty set of green.\\  
An element $-\<-x>$ for each $x \in \mathbb{R},\: x>0$  will be interpreted as a one-dimensional vector but without endpoints and marked with a black color with green endpoints 
 \tikz{\draw[] (0,0)--(.9,0);
\draw[fill,green](0,0)circle(1pt);
\draw[green,->](.9,0)--(1,0)} \ .\\
An element $\<-x>$ for each $x \in \mathbb{R},\: x>0$  will be interpreted as a one-dimensional vector  without endpoints and marked with a red color with green endpoints \;
 \tikz{\draw[red] (.1,0)--(1,0);
\draw[fill,green](1,0)circle(1pt);
\draw[green,<-](0,0)--(.1,0)} \ .\\

\noindent Why the vectors $\<-x>$ and $-\<-x>$ for $ x>0$  are without the endpoints we will see after introducing the geometric interpretation of the operation (\ref{38}).\\

\noindent We will  create  a geometric construction of the addition (\ref{38}).\\
 The sum $ \<x+t> $ is obtained by placing  the tail of the vector $ \<x> $ at the head of the vector $\<t> $ (Fig. \ref{f41}).  The sum $ \<y+t> $ is obtained by placing  the tail of the vector $ \<t> $ at the head of the vector $ \<y> $.\\
However, it should be emphasized that each of the elements  $ \<x+t> $  and  $ \<y+t> $  should be treated as one vector. The division  of the vector  $ \<x+t> $ into the two vectors  $ \<t> $ and  $ \<x> $ shown in the  Fig. \ref{f41} is only to illustrate where the vector  $ \<x+t> $  lies versus the vector  $ \<t> $ .

\begin{figure}[H]
\begin{center}
 \begin{tikzpicture}[>=stealth]
\draw[->](0,2.2)--(1,2.2);\draw[fill](0,2.2)circle(1pt);\node at(3.1,2.1){$\< t >$, $ t>0 $};
\draw[->](1,1.2)--(2,1.2);\draw[fill](1,1.2)circle(1pt);\node at(3.1,1.1){$\< x >$, $ x>0 $};
\draw[->](0,0.2)--(2,0.2);\draw[fill](0,.2)circle(1pt);\node at(3.1,0.1){$\< x+t >$};
\draw[red](5.05,2.2)--(5.9,2.2);\draw[green,<-](4.9,2.2)--(5.05,2.2);
  \draw[fill,green](5.9,2.2)circle(1pt); \node at(8,2.1){$\< t >$, $ t<0 $};
\draw[->](4.9,1.2)--(6.9,1.2);\draw[fill](4.9,1.2)circle(1pt);\node at(8,1.1){$\< x >$, $ x>0 $};
\draw[->](5.9,0.2)--(6.9,0.2);\draw[fill](5.9,.2)circle(1pt);\node at(8,0.1){$\< x+t >$};
\draw[red](9.75,2.2)--(11.6,2.2);\draw[green,<-](9.6,2.2)--(9.75,2.2);
\draw[fill,green](11.6,2.2)circle(1pt);\node at(12.7,2.1){$\< t >$, $ t<0 $};
\draw[->](9.6,1.2)--(10.6,1.2);\draw[fill](9.6,1.2)circle(1pt);\node at(12.7,1.1){$\< x >$, $ x>0 $};
\draw[red](10.75,0.2)--(11.6,0.2);\draw[green,<-](10.6,0.2)--(10.75,0.2);
  \draw[fill,green](11.6,0.2)circle(1pt);\node at(12.7,0.1){$\< x+t >$};
\end{tikzpicture}
\caption{ The sum $ \<x+t> $ for different $ t $.}
\label{f41}
\end{center}
\end{figure}
\noindent Figure (\ref{f42}) illustrates the geometric interpretation of Eq. (\ref{38}) for  positive real numbers $ x,y,t $.
\begin{figure}[H]
\begin{center}
 \begin{tikzpicture}[>=stealth]
\draw[->](1,3.2)--(2,3.2);\draw[fill](1,3.2)circle(1pt);\node[right] at(3.2,3.1){$\< t >$};
\draw[->](1,2.2)--(3,2.2);\draw[fill](1,2.2)circle(1pt);\node[right] at(3.2,2.1){$\<x+ t >$};
\draw[->](0,1.5)--(2,1.5);\draw[fill](0,1.5)circle(1pt);\node[right] at(3.2,1.4){$\< y+t >$};
\draw[red,<-](1,0.8)--(2,0.8);\draw[fill,red](2,.8)circle(1pt);\node[right] at(3.2,0.8){$-\<t >$};
\draw[->](0,0.1)--(3,0.1);\draw[fill](0,.1)circle(1pt);\node[right] at(3.2,0){$\< x+y+t >$};
\end{tikzpicture}
\caption{ The geometric interpretation of Eq. (\ref{38}) for  positive real numbers $ x,y,t $.}
\label{f42}
\end{center}
\end{figure}

\noindent As we can see, the order of components in the element $\<x+y+t>$ is important, therefore Eq. (\ref{38}) should be written as follows
\begin{align}
\forall x,y,t \in \mathbb{R}\qquad
       \<x+y+t> = &\  \< x+0+t> + \<0+y+t> - \<0+0+t> \qquad
       \label{382}
    \end{align} 

\noindent From the below equation
\[
\< -1>=\<-1-1+1>=\<-1+0+1>+\<0-1+1>-\<1>=2\<0>-\<1>
\]
we  can get the way  of building of the triangle $\<-1>$.
Fig. \ref{f43} shows each step of  the construction of $\<-1>$.
\begin{figure}[H]
\begin{center}
 \begin{tikzpicture}[>=stealth]
\draw[->](0,2.5)--(1,2.5);\draw[fill](0,2.5)circle(1pt);\node[right] at(1,2.5){$\< 1 >$};
\draw[fill](0,1.5)circle(1pt);\draw[fill](1,1.5)circle(1pt);
\node[left] at(0,1.5){$\<0 >$};\node[right] at(1,1.5){$\<0 >$};
\draw[red,<-](0,0.8)--(1,0.8);\draw[fill,red](1,.8)circle(1pt);\node[right] at(1,0.8){$-\<1 >$};
\draw[red](0.15,0.1)--(1,0.1);\draw[fill,green](1,.1)circle(1pt);
\draw[green,<-](0,0.1)--(0.15,.1);\node[right] at(1,0){$\<-1 >$};
\end{tikzpicture}
\caption{ Stages of  the construction of the element $\<-1>$.}
\label{f43}
\end{center}
\end{figure} 
  
\noindent The definitions that an equation is true in the geometric or algebraic sense  are the same as for the elements of the set $\mathbb{R}_2$.  
 
\noindent Because elements of the set $\mathbb{R}_1$  are vectors so Eq. (\ref{382}) is true in the geometric sense $\forall x,y,t \in \mathbb{R}$.  

\noindent According to proof of  Theorem \ref{t2.4},  to show the geometric interpretation of Eq. (\ref{382}) for any real numbers $x,y,t $ it is enough to show it for positive  $ x,y $ and any $ t $.
We have the following cases.
\begin{enumerate}
\item[(1)] $t >0$, (Fig.~\ref{f42}).\\[.1cm]
\text{In next cases  $t<0$}.
\item[(2)] $x+t>0, y+t>0 $, (Fig.~\ref{f44}).
\item[(3)] $x+t>0, y+t<0$, (Fig.~\ref{f44}).
\item[(4)] $x+t<0, y+t<0, x+y+t>0$.
\item[(5)] $x+t<0, y+t<0, x+y+t<0$.
\end{enumerate}

\begin{figure}[H]
\begin{center}
 \begin{tikzpicture}[>=stealth]
\draw[red](1.15,3.2)--(2,3.2);\draw[green,<-](1,3.2)--(1.15,3.2);
   \draw[fill,green](2,3.2)circle(1pt);\node[right] at(3.2,3.1){$\< t >$};
\draw[->](2,2.2)--(3,2.2);\draw[fill](2,2.2)circle(1pt);\node[right] at(3.2,2.1){$\<x+ t >$};
\draw[->](0,1.5)--(1,1.5);\draw[fill](0,1.5)circle(1pt);\node[right] at(3.2,1.4){$\< y+t >$};
\draw(1,0.8)--(1.85,0.8); \draw[green,->](1.85,0.8)--(2,0.8);
    \draw[fill,green](1,.8)circle(1pt);\node[right] at(3.2,0.8){$-\<t >$};
\draw[->](0,0.1)--(3,0.1);\draw[fill](0,.1)circle(1pt);\node[right] at(3.2,0){$\< x+y+t >$};

\draw[red](7.15,3.2)--(9,3.2); \draw[green,<-](7,3.2)--(7.15,3.2);
    \draw[fill,green](9,3.2)circle(1pt);\node[right] at(10.2,3.1){$\< t >$};
\draw[->](9,2.2)--(10,2.2);\draw[fill](9,2.2)circle(1pt);\node[right] at(10.2,2.1){$\<x+ t >$};
\draw[red](7.15,1.5)--(8,1.5); \draw[green,<-](7,1.5)--(7.15,1.5);
   \draw[fill,green](8,1.5)circle(1pt);\node[right] at(10.2,1.4){$\< y+t >$};
\draw(7,0.8)--(8.85,0.8); \draw[green,->](8.85,0.8)--(9,0.8);
    \draw[fill,green](7,.8)circle(1pt);\node[right] at(10.2,0.8){$-\<t >$};
\draw[->](8,0.1)--(10,0.1);\draw[fill](8,.1)circle(1pt);\node[right] at(10.2,0){$\< x+y+t >$};
\end{tikzpicture}
\caption{ The geometric interpretation of Eq. (\ref{382}). The cases (2) and (3).}
\label{f44}
\end{center}
\end{figure}
\noindent But if we replace Eq. (\ref{382}) by  equivalent Eq. (\ref{40})
\begin{align} \begin {split}
 \<t> &= \<-x-y+(x+y+t)>\\
       & = \<-x+0+(x+y+t)>  + \<0-y+(x+y+t)>  - \<0+0+ (x+y+t)> 
    \label{40}
\end {split} \end{align}
 then
$\<t>$ acts as $\< t'> = \<(x+y+t)>$,  $\<x+t> = \<0-y+(x+y+t)>$ acts as $\<0+y+t>$ and $\<y+t> = \<-x+0+(x+y+t)>$ acts as $\<x+0+t>$.\\
\noindent In the case (4) only $t'$ is positive and in the case (2) only $t$ is negative. By changing the sign in all components of the case (4) we will receive the case (2).\\
\noindent Similarly, in the case (5)  all components are negative and in the case (1) all components  are positive. By changing the sign in all components of the case (5) we will receive the case (1).\\
So we have 3 different cases of adding in the set $ \mathbb {R} _1 $. 
 
\noindent It can be proved by mathematical induction that the  Eq. (\ref{39}) follows   from   Eq. (\ref{38}) for natural $n$ (Fig.~\ref{f45}).  
 \begin{align}
\forall n \in \mathbb{N}\qquad
\<n>= n\<1>-(n-1)\big(\<0>\big) = n\<1>+(n-1)\big(-\<0>\big) .
           \label{41}
\end{align}
\begin{figure}[H]
\begin{center}
 \begin{tikzpicture}
\draw[->](7,3)--(8,3);\draw[fill](7,3)circle(1pt);
\draw[->](8,2.7)--(9,2.7);\draw[fill](8,2.7)circle(1pt);
 \draw[->](9,2.4)--(10,2.4);\draw[fill](9,2.4)circle(1pt);
\draw[dashed](10,2.1)--(11,2.1);
\draw[->](11,1.8)--(12,1.8);\draw[fill](11,1.8)circle(1pt);
\node at(5.9,2.3){=};
\draw(0,2.4)--(3,2.4);\draw[fill](0,2.4)circle(1pt);
 \draw[dashed](3,2.4)--(4,2.4);
\draw[->](4,2.4)--(5,2.4);
\draw[fill,red](8,2.4)circle(1pt);\draw[fill,red](9,2.1)circle(1pt);
\draw[fill,red](10,1.8)circle(1pt);\draw[fill,red](11,1.5)circle(1pt);
\node at(2.8,0.8){$\< n >$};
\node at(5.9,0.8){=};
\node at(9,0.8){$n\< 1 >+(n-1)(-\< 0 >)$};
\end{tikzpicture}
\caption{ Interpretation of the equation (\ref{41}).}
\label{f45}
\end{center}
\end{figure}

\section{Multiplication in the set $\mathbb{R}_1$.}

\noindent Let us fix a point in $\mathbb{R}_1$ and denote it as $\<0+0>$. Then, using Eq. (\ref{7}), every element of the set $\mathbb{R}_2$ can be obtained from the distinguished element $\<0+0>$ as an element $\<x+y+0>$. We will mark it $\<x,y>$.\\
For the element $\<t>=\<t_x+t_y+t_y,0>=\<t_x,t_y> $
 Eq. (\ref{382}) takes the following form
 
 \begin{align}\begin{split}
  \forall x,y,t \in \mathbb{R}  \\
   \<x+y+t>&= \< x+t_x,y+t_y> 
  = \< x+t_x,t_y> + \<t_x,y+t_y> - \< t_x,t_y>  \\
             \label{383}
 \end{split}   \end{align}

\noindent It is easy to see that the elements $\<x,y>$ can be interpreted as the points of the space $\mathbb{R}^2$.\\
So all vectors from   $\mathbb{R}_1$ satisfying the condition $x+y=t$ for a fixed $t$, correspond to a line in $\mathbb{R}^2$.\\
Let us also observe that the elements of Eq. (\ref{382}) form the vertices of a rectangle  in $\mathbb{R}^2$.\\

\subsection{Multiplication in the set $\mathbb{R}_1$}

\noindent We know that the set $\mathbb{R}_1$ is closed under multiplication. So what does multiplication of elements $\<x,y> \in \mathbb{R}_1$ look like? Let us assume that all components of $\<x,y> $ are equivalent,  multiplication is associative and commutative, and multiplication is distributive over addition with respect to the components.  Then the following equation must hold.  
\begin{align}
     \Big(\<1,0>^2\Big)\<0,1>=\<1,0>\Big(\<1,0>\<0,1>\Big).
     										\label{384}
 \end{align}

\noindent We assumed that all components of $\<x,y>$ are equivalent so\\
$\<1,0>\<0,1>=\big\langle\frac{1}{2},\frac{1}{2}\big\rangle$.\\[0.1cm]
Let us suppose that \\
$\<1,0>^2= \<1-a,a> $ and $\<0,1>^2= \<a,1-a> $.\\
Then
\begin{align*}\begin{split}
 \Big(\<1,0>^2\Big)\<0,1>=&\ \<a,1-a>\<0,1>  \\
        = &\ \<a,0>\<0,1> +\<0,1-a>\<0,1> \\
        = &\  \Big\langle\frac{a}{2},\frac{a}{2}\Big\rangle
            + \<(1-a)^2,a-a^2> \\[0.1cm]
        = &\ \Big\langle\frac{2-3a+2a^2}{2},\frac{3a-2a^2}{2}\Big\rangle,
 \end{split}   \end{align*}
\noindent and
\begin{align*}\begin{split}
\<1,0>\Big(\<1,0>\<0,1>\Big)
       =&\ \<1,0>\Big\langle\frac{1}{2},\frac{1}{2}\Big\rangle  \\[0.1cm]
        = &\ \<1,0>\Big\langle\frac{1}{2},0\Big\rangle 
            +\<1,0>\Big\langle0,\frac{1}{2}\Big\rangle \\[0.1cm]
        = &\ \Big\langle\frac{a}{2},\frac{1-a}{2}\Big\rangle 
            +\Big\langle\frac{1}{4},\frac{1}{4}\Big\rangle\\[0.1cm]
        = &\ \Big\langle\frac{2a+1}{4},\frac{3-2a}{4}\Big\rangle. 
 \end{split}   \end{align*}
 
 \noindent According to Eq. (\ref{384}) we get
 \begin{align}
 \Big\langle\frac{2-3a+2a^2}{2},\frac{3a-2a^2}{2}\Big\rangle
 =  \Big\langle\frac{2a+1}{4},\frac{3-2a}{4}\Big\rangle.
     										\label{385}
 \end{align}  
 \noindent If we compare the first or second components of the elements in the Eq. (\ref{385}), we get two cases\\
 $a=\frac{1}{2} \quad$ or $\quad a=\frac{3}{2}$.\\
  The first case appears to be of little significance, therefore we shall consider the second case.\\
\noindent So
\begin{align*}
     \<1,0>^2=\Big\langle\frac{3}{2},-\frac{1}{2}\Big\rangle, \qquad	
    \<0,1>^2=\Big\langle-\frac{1}{2},\frac{3}{2}\Big\rangle.	 		
 \end{align*}   
    
\noindent Let us find a general formula for multiplication for arbitrary $x,y,a,b \in \mathbb{R}$.
\begin{align}\begin{split}
\<x,y>\<a,b> =&\
    \Big\langle\frac{3xa}{2},-\frac{xa}{2}\Big\rangle
  +\Big\langle\frac{xb}{2},\frac{xb}{2}\Big\rangle \\[0.1cm]
 & +\Big\langle\frac{a}{2},\frac{ya}{2}\Big\rangle
  +\Big\langle-\frac{yb}{2},\frac{3yb}{2}\Big\rangle  \\[0.1cm]
     = &\ \Big\langle  \frac{x(3a+b)+y(a-b)}{2},
      \frac{x(-a-b)+y(a+3b)}{2}\Big\rangle\\[0.1cm]
  = &\ \Big\langle \frac{(3x-y)(a+b)+(x+y)(3a-b)}{4},\\
     & \qquad \qquad  \frac{(-x+3y)(a+b)+(x+y)(-a+3b)}{4}  \Big\rangle            .  \label{386}
 \end{split}   \end{align}   
 
\noindent Depending on the context, we will use one of the above multiplication formulas.\\

\noindent Now we can verify that the associative property of multiplication holds for any elements of the set $\mathbb{R}_2$.

 \begin{thm}
\begin{align*} 
 \forall x,y,a,b,A,B \in \mathbb{R} \qquad\qquad\\
 \Big(\<x,y>\<a,b>\Big)\<A,B>&\ = \<x,y>\Big(\<a,b>\<A,B>\Big)
 \end{align*}
 \end{thm}
 \begin{proof}
Becouse  all components of $\<x,y> $ are equivalent it is enough to verify associativity on first components of $\<x,y>$.\\
Let us denote
\[\<x,y>\<a,b>=\<f,g>,\]
where
\begin{align*}
f &= \frac{(3x-y)(a+b)+(x+y)(5a-b)}{4}\\[0.1cm]
g  &=  \frac{(-x+3y)(a+b)+(x+y)(-a+3b)}{4}.  
\end{align*}
Then
\begin{multline*}
\Big(\<x,y>\<a,b>\Big)\<A,B>  = \<f,g>\<A,B>\\
  = \Big\langle \frac{(3f-g)(A+B)+(f+g)(3A-B)}{4},\quad                      
     \Big\rangle\\
\end{multline*}
We have
\begin{align*}\begin{split}
(3f-g)& = 3\frac{(3x-y)(a+b)+(x+y)(3a-b)}{4}\\
       & \quad -\frac{(-x+3y)(a+b)+(x+y)(-a+3b)}{4}\\ 
        & = (3x-y)(a+b)+(x+y)(2a-2b)  
\end{split}\end{align*}
and
\[f+g= (x+y)(a+b)\]
So the numerator of the first component of the expression 
$\Big(\<x,y>\<a,b>\Big)\<A,B> $ will take the form
\begin{multline*}
(3f-g)(A+B)+(f+g)(3A-B)\\
  =\big[(3x-y)(a+b)+(x+y)(2a-2b)\big](A+B) \\
     +(x+y)(a+b)(3A-B)
\end{multline*}
If we substitute the expression $(3x-y)$ with $(3A-B)$, 
$\<x+y> $ with $\<A+B>$ and vice versa, the above equation remains unchanged. This shows that multiplication is associtive.\\
 \end{proof}
 From Eq. (\ref{386}) we get
 \begin{align}
 \<x,y>^2 =
  \Big\langle \frac{(x+y)(3x-y)}{2},
  \frac{(x+y)(-x+3y)}{2} \Big\rangle .
     				\label{388}
 \end{align} 
\begin{align*}
\<x,y>\<y,x> 
  =  \Big\langle \frac{(x+y)^2}{2},
                 \frac{(x+y)^2}{2} \Big\rangle . 
\end{align*}   
 
\noindent \subsection{Roots in the set $\mathbb{R}_1$}

\noindent It is straightforward to derive formulas for the roots of elements 
 $\<x,y>$, where $x+y \neq 0$.\\
 \begin{align*}
\sqrt{\<x,y>} & = 
  \Big\langle\frac{3x+y}{4\sqrt{x+y}},
  \frac{x+3y}{4\sqrt{x+y}},\Big \rangle,\\[0.1cm]
\sqrt{\<x,1-x>} &= 
  \Big\langle\frac{2x+1}{4}, \frac{3-2x}{4}\Big\rangle.
\end{align*}

\subsection{Division in the set $\mathbb{R}_1$}

For all 
$  x,y,a,b \in \mathbb{R}$, such that  $a+b\neq 0 $
 there exist $D,E \in \mathbb{R} $ such that
\begin{align}
\<x,y>= \<D,E>\<a,b>, \label{389}
\end{align}
 where\\
 \begin{align*}
\<D,E>&=\Big\langle \frac{x(a+3b)+y(-a+b)}{2(a+b)^2}, 
             \frac{x(a-b)+y(3a+b)}{2(a+b)^2}\Big\rangle\\[0.1cm]\\ 
      &=\Big\langle \frac{(x+y)(-a+3b)+(3x-y)(a+b)}{4(a+b)^2},\\
        &\qquad\qquad\qquad
        \frac{(x+y)(3a-b)+(-x+3y)(a+b)}{4(a+b)^2}\Big\rangle.\\[0.1cm]      
\end{align*}
So if an element $\<a,b>$ is not a point, then it divides 
any vector $\<x,y>$.\\

\noindent When $y=-x$ and $a+b\neq 0$, then
\[\<D,E>= \Big\langle\frac{x}{a+b},\frac{-x}{a+b}\Big\rangle.\]

\section{Other geometric interpretation of the set $\mathbb{R}_1$.} 

\noindent The set $\mathbb{R}_1$ has  another richer geometric interpretation.\\
An element $\<1>$  can be interpreted as a closed triangle  without one vertex and marked with a black color 
\tikz{\draw[fill,gray!60] (0,0)--(1,0)--(.5,.8)--(0,0);
\draw (0,0)--(1,0)--(.5,.8)--(0,0);
\draw[fill](0,0)circle(1pt);\draw[fill](1,0)circle(1pt);
\draw[fill,green](.5,.8)circle(1pt);} \ .\\
An element $\<0>$  can be interpreted as a  triangle  
\tikz{\draw[fill,red!30] (.5,0)--(1,.8)--(0,.8)--(.5,0);
\draw[thick,green] (0,.8)--(.5,0)--(1,.8)--cycle;
\draw[fill](.5,0)circle(1pt);\draw[fill,green](1,.8)circle(1pt);
\draw[fill,green](0,.8)circle(1pt);} .\\
 
\noindent The Figures ~\ref{f46}-~\ref{f48}  show the iterpretation  of $\<x>$ for different $x$. 
\begin{figure}[H]
\begin{minipage}[t]{.45\textwidth}
\begin{center} 
\begin{tikzpicture}[>=stealth]
\draw[fill,gray!60](0,0)--(3,0)--(2.5,.8)--(.5,.8)--cycle;
\draw(.5,.8)--(0,0)--(3,0)--(2.5,.8);
\draw[thick,green](.5,.8)--(2.5,.8);
\draw[fill,green](.5,.8)circle(1pt);\draw[fill,green](2.5,.8)circle(1pt);
\draw [decorate,decoration={brace,amplitude=5pt},
       xshift=0pt,yshift=-3pt](3,0)--(0,0);
\node at(1.5,-.5){$x$};   
\end{tikzpicture}
\caption{$ \<x>$ for $x>1$.}
\label{f46}
\end{center}
\end{minipage}
\hfill
\begin{minipage}[t]{.45\textwidth}
\begin{center} 
\begin{tikzpicture}[>=stealth]
\draw[fill,red!30] (0,.8)--(.5,0)--(2.5,0)--(3,.8)--cycle;
\draw[red](.5,0)--(2.5,0);
\draw[thick,green](2.5,0)--(3,.8)--(0,.8)--(.5,0);
\draw[fill,green](0,.8)circle(1pt); \draw[fill,green](.5,0)circle(1pt); 
\draw[fill,green](3,.8)circle(1pt);\draw[fill,green](2.5,0)circle(1pt); 
\draw [decorate,decoration={brace,amplitude=5pt},
       xshift=0pt,yshift=-3pt](2.5,0)--(.5,0);
\node at(1.5,-.5){$x$};
\end{tikzpicture}
\caption{$\<x>$ for $x<0$.}
\label{f47}
\end{center}
\end{minipage}
\end{figure}

\begin{figure}[H]
\begin{center} 
\begin{tikzpicture}[scale=1.36]
\draw[fill,red!30](.5,.4)--(1,1.2)--(0,1.2)--cycle;
\draw[thick,green](1,1.2)--(.5,.4)--(0,1.2)--cycle;
\draw[fill,green](0,1.2)circle(1pt);\draw[fill,green](1,1.2)circle(1pt);
\draw[fill,gray!60](.25,0)--(.75,0)--(.5,.4)--cycle;
\draw(.25,0)--(.75,0)--(.5,.4)--cycle;
\draw[fill](.5,.4)circle(1pt);
\draw [decorate,decoration={brace,amplitude=5pt},
       xshift=0pt,yshift=-3pt](.75,0)--(.25,0);
\node at(.5,-.35){$x$};
\draw [decorate,decoration={brace,amplitude=5pt},
       xshift=0pt,yshift=3pt](0,1.2)--(1,1.2);
\node at(.5,1.6){$1-x$};
\end{tikzpicture}
\caption{$\<x>$ for $0< x<1$. The Figure is to a scale enlarged twice. }
\label{f48}
\end{center}
\end{figure} 

\noindent In Figures ~\ref{f49}-~\ref{f54} we can see the components of the  Eq. (\ref{38}).
To make it easier to understand the rules of addition, we gave direction to the bottom edge of each element.
We can see that the rules of adding are the same as in the section 6.

\begin{figure}[H]
\begin{minipage}[t]{.45\textwidth}
\begin{center} 
\begin{tikzpicture}[>=stealth]
\draw[fill,gray!60](0,0)--(4,0)--(3.5,.8)--(.5,.8)--cycle;
\draw(.5,.8)--(0,0)--(4,0)--(3.5,.8);
\draw[thick,green](.5,.8)--(3.5,.8);
\draw[fill,green](.5,.8)circle(1pt);\draw[fill,green](3.5,.8)circle(1pt);
\draw[thick,|->](0,0)--(4,0);
\node[right] at(4,.4){$\<y+t> $ };
  
\draw[fill,gray!60](0,1)--(2,1)--(1.5,1.8)--(.5,1.8)--cycle;
\draw(.5,1.8)--(0,1)--(2,1)--(1.5,1.8);
\draw[thick,green](.5,1.8)--(1.5,1.8);
\draw[fill,green](.5,1.8)circle(1pt);\draw[fill,green](1.5,1.8)circle(1pt);
\draw[thick,|->](0,1)--(2,1);
\node[right] at(4,1.4){$\<y> $ };

\draw[fill,gray!60](2,2)--(4.5,2)--(4,2.8)--(2.5,2.8)--cycle;
\draw(2.5,2.8)--(2,2)--(4.5,2)--(4,2.8);
\draw[thick,green](2.5,2.8)--(4,2.8);
\draw[fill,green](2.5,2.8)circle(1pt);\draw[fill,green](4,2.8)circle(1pt);
\draw[thick,|->](2,2)--(4.5,2);
\node[left] at(2,2.4){$\<x+t> $ }; 

\draw[fill,red!30](4,3.8)--(4.25,3.4)--(4.5,3.8)--cycle;
\draw[thick,green](4,3.8)--(4.25,3.4)--(4.5,3.8)--cycle;
\draw[fill,green](4,3.8)circle(1pt);\draw[fill,green](4.5,3.8)circle(1pt);
\draw[fill,gray!60](4,3)--(4.5,3)--(4.25,3.4)--cycle;
\draw(4,3)--(4.5,3)--(4.25,3.4)--cycle;
\draw[thick,|->](4,3)--(4.5,3);
\draw[fill](4.25,3.4)circle(1pt);
\node[left] at(2,3.4){$\<x> $ };

\draw[fill,gray!60](2,4)--(4,4)--(3.5,4.8)--(2.5,4.8)--cycle;
\draw(2.5,4.8)--(2,4)--(4,4)--(3.5,4.8);
\draw[thick,green](2.5,4.8)--(3.5,4.8);
\draw[fill,green](2.5,4.8)circle(1pt);\draw[fill,green](3.5,4.8)circle(1pt);
\draw[thick,|->](2,4)--(4,4);
\node[left] at(2,4.4){$\<t>, \; t>0 $ };
\end{tikzpicture}
\caption{$ \<x+t>,\; \<y+t>$ for $x>0$ \hspace{1.5cm} and $y>0$.}
\label{f49}
\end{center}
\end{minipage}
\hfill
\begin{minipage}[t]{.45\textwidth}
\begin{center} 
\begin{tikzpicture}[>=stealth]
\draw[fill,gray!60](.5,0)--(3.5,0)--(3,.8)--(1,.8)--cycle;
\draw(1,.8)--(.5,0)--(3.5,0)--(3,.8);
\draw[thick,green](1,.8)--(3,.8);
\draw[fill,green](1,.8)circle(1pt);\draw[fill,green](3,.8)circle(1pt);
\draw[thick,|->](.5,0)--(3.5,0);
\node[right] at(4,.4){$\<y+t> $ };

\draw[fill,red!30](.5,1)--(1,1.8)--(0,1.8)--cycle;
\draw[thick,green](1,1.8)--(.5,1)--(0,1.8)--cycle;
\draw[fill,green](1,1.8)circle(1pt);\draw[fill,green](0,1.8)circle(1pt);
\draw[fill](.5,1)circle(1pt);
\node[right] at(4,1.4){$\<y> $ };

\draw[fill,gray!60](.5,2)--(2.5,2)--(2,2.8)--(1,2.8)--cycle;
\draw(1,2.8)--(.5,2)--(2.5,2)--(2,2.8);
\draw[thick,green](1,2.8)--(2,2.8);
\draw[fill,green](1,2.8)circle(1pt);\draw[fill,green](2,2.8)circle(1pt);
\draw[thick,|->](.5,2)--(2.5,2);
\node[right] at(4,2.4){$\<x+t> $ };

\draw[fill,red!30](2.5,3)--(3.5,3)--(4,3.8)--(2,3.8)--cycle;
\draw[red](2.5,3)--(3.5,3);
\draw[thick,green](2.5,3)--(2,3.8)--(4,3.8)--(3.5,3);
\draw[fill,green](2,3.8)circle(1pt);\draw[fill,green](4,3.8)circle(1pt);
\draw[thick,red,-](2.5,3)--(3.5,3);
\draw[thick,green,<-](2.5,3)--(2.6,3);\draw[thick,green,-|](3.45,3)--(3.5,3);
\node[right] at(4,3.4){$\<x> $ };

\draw[fill,gray!60](.5,4)--(3.5,4)--(3,4.8)--(1,4.8)--cycle;
\draw(1,4.8)--(.5,4)--(3.5,4)--(3,4.8);
\draw[thick,green](1,4.8)--(3,4.8);
\draw[fill,green](1,4.8)circle(1pt);\draw[fill,green](3,4.8)circle(1pt);
\draw[thick,|->](.5,4)--(3.5,4);
\node[right] at(4,4.4){$\<t>, \; t>0 $ };
\end{tikzpicture}
\caption{$\<x+t>, \<y+t>$  for $-t<x\leqslant 0$ and $-t<y\leqslant 0$.}
\label{f50}
\end{center}
\end{minipage}
\end{figure}

\begin{figure}[H]
\begin{minipage}[t]{.44\textwidth}
\begin{center} 
\begin{tikzpicture}[>=stealth]
\draw[fill,red!30](3.5,0)--(4,.8)--(3,.8)--cycle;
\draw[thick,green](4,.8)--(3.5,0)--(3,.8)--cycle;
\draw[fill,green](4,.8)circle(1pt);\draw[fill,green](3,.8)circle(1pt);
\draw[fill](3.5,0)circle(1pt);
\node[right] at(4,.4){$\<y+t> $ };
  
\draw[fill,red!30](1.5,1)--(3.5,1)--(4,1.8)--(1,1.8)--cycle;
\draw[thick,red,-](1.6,1)--(3.45,1);
\draw[thick,green](1.5,1)--(1,1.8)--(4,1.8)--(3.5,1);
\draw[fill,green](1,1.8)circle(1pt);\draw[fill,green](4,1.8)circle(1pt);
\draw[thick,green,<-](1.5,1)--(1.6,1);\draw[thick,green,-|](3.45,1)--(3.5,1);
\node[right] at(4,1.4){$\<y> $ };

\draw[fill,red!30](.5,2)--(1.5,2)--(2,2.8)--(0,2.8)--cycle;
\draw[thick,red,-](.6,2)--(1.45,2);
\draw[thick,green](.5,2)--(0,2.8)--(2,2.8)--(1.5,2);
\draw[fill,green](0,2.8)circle(1pt);\draw[fill,green](2,2.8)circle(1pt);
\draw[thick,green,<-](.5,2)--(.6,2);\draw[thick,green,-|](1.45,2)--(1.5,2);
\node[right] at(4,2.4){$\<x+t> $ }; 

\draw[fill,red!30](.5,3)--(3.5,3)--(4,3.8)--(0,3.8)--cycle;
\draw[thick,red,-](.6,3)--(3.45,3);
\draw[thick,green](.5,3)--(0,3.8)--(4,3.8)--(3.5,3);
\draw[fill,green](0,3.8)circle(1pt);\draw[fill,green](4,3.8)circle(1pt);
\draw[thick,green,<-](.5,3)--(.6,3);\draw[thick,green,-|](3.45,3)--(3.5,3);
\node[right] at(4,3.4){$\<x> $ };

\draw[fill,gray!60](1.5,4)--(3.5,4)--(3,4.8)--(2,4.8)--cycle;
\draw(2,4.8)--(1.5,4)--(3.5,4)--(3,4.8);
\draw[thick,green](2,4.8)--(3,4.8);
\draw[fill,green](2,4.8)circle(1pt);\draw[fill,green](3,4.8)circle(1pt);
\draw[thick,|->](1.5,4)--(3.5,4);
\node[right] at(4,4.4){$\<t>, \; t>0 $ };
\end{tikzpicture}
\caption{$ \<x+t>,\; \<y+t>$ for $x+t\leqslant 0$ \hspace{1.5cm} and $y+t\leqslant 0$.}
\label{f51}
\end{center}
\end{minipage}
\hfill
\begin{minipage}[t]{.52\textwidth}
\begin{center} 
\begin{tikzpicture}[>=stealth]
\draw[fill,red!30](2.5,0)--(4.5,0)--(5,.8)--(2,.8)--cycle;
\draw[thick,red,-](2.6,0)--(4.45,0);
\draw[thick,green](2.5,0)--(2,.8)--(5,.8)--(4.5,0);
\draw[fill,green](2,.8)circle(1pt);\draw[fill,green](5,.8)circle(1pt);
\draw[thick,green,<-](2.5,0)--(2.6,0);\draw[thick,green,-|](4.45,0)--(4.5,0);
\node[right] at(5,.4){$\<y+t> $ };
  
\draw[fill,red!30](4.5,1)--(5,1.8)--(4,1.8)--cycle;
\draw[green](5,1.8)--(4.5,1)--(4,1.8)--cycle;
\draw[fill,green](5,1.8)circle(1pt);\draw[fill,green](4,1.8)circle(1pt);
\draw[fill](4.5,1)circle(1pt);
\node[right] at(5,1.4){$\<y> $ };

\draw[fill,red!30](.5,2)--(4.5,2)--(5,2.8)--(0,2.8)--cycle;
\draw[thick,red](.6,2)--(4.45,2);
\draw[thick,green](.5,2)--(0,2.8)--(5,2.8)--(4.5,2);
\draw[fill,green](0,2.8)circle(1pt);\draw[fill,green](5,2.8)circle(1pt);
\draw[thick,green,<-](.5,2)--(.6,2);\draw[thick,green,-|](4.45,2)--(4.5,2);
\node[right] at(5,2.4){$\<x+t> $ }; 

\draw[fill,red!30](.5,3)--(2.5,3)--(3,3.8)--(0,3.8)--cycle;
\draw[thick,red](.6,3)--(2.45,3);
\draw[thick,green](.5,3)--(0,3.8)--(3,3.8)--(2.5,3);
\draw[fill,green](0,3.8)circle(1pt);\draw[fill,green](3,3.8)circle(1pt);
\draw[thick,green,<-](.5,3)--(.6,3);\draw[thick,green,-|](2.45,3)--(2.5,3);
\node[right] at(5,3.4){$\<x> $ };

\draw[fill,red!30](2.5,4)--(4.5,4)--(5,4.8)--(2,4.8)--cycle;
\draw[thick,red](2.6,4)--(4.45,4);
\draw[thick,green](2.5,4)--(2,4.8)--(5,4.8)--(4.5,4);
\draw[fill,green](2,4.8)circle(1pt);\draw[fill,green](5,4.8)circle(1pt);
\draw[thick,green,<-](2.5,4)--(2.6,4);\draw[thick,green,-|](4.45,4)--(4.5,4);
\node[right] at(5,4.4){$\<t>, \; t\leqslant 0 $ };
\end{tikzpicture}
\caption{$\<x+t>,\; \<y+t>$ for $x\leqslant 0$ \hspace{2cm} and $y\leqslant 0$.}
\label{f52}
\end{center}
\end{minipage}
\end{figure}

\begin{figure}[H]
\begin{minipage}[t]{.38\textwidth}
\begin{center} 
\begin{tikzpicture}[>=stealth]
\draw[fill,red!30](.5,0)--(1,.8)--(0,.8)--cycle;
\draw[thick,green](0,.8)--(.5,0)--(1,.8)--cycle;
\draw[fill,green](0,.8)circle(1pt);\draw[fill,green](1,.8)circle(1pt);
\draw[fill](.5,0)circle(1pt);
\node[right] at(3,.4){$\<y+t> $ };
  
\draw[fill,gray!60](.5,1)--(2.5,1)--(2,1.8)--(1,1.8)--cycle;
\draw(1,1.8)--(.5,1)--(2.5,1)--(2,1.8);
\draw[thick,green](1,1.8)--(2,1.8);
\draw[fill,green](1,1.8)circle(1pt);\draw[fill,green](2,1.8)circle(1pt);
\draw[thick,|->](.5,1)--(2.5,1);
\node[right] at(3,1.4){$\<y> $ };

\draw[fill,red!30](1.5,2)--(2.5,2)--(3,2.8)--(1,2.8)--cycle;
\draw[thick,red](1.6,2)--(2.45,2);
\draw[thick,green](1.5,2)--(1,2.8)--(3,2.8)--(2.5,2);
\draw[fill,green](1,2.8)circle(1pt);\draw[fill,green](3,2.8)circle(1pt);
\draw[thick,green,<-](1.5,2)--(1.6,2);\draw[thick,green,-|](2.45,2)--(2.5,2);
\node[right] at(3,2.4){$\<x+t> $ }; 

\draw[fill,gray!60](.5,3)--(1.5,3)--(1,3.8)--cycle;
\draw(.5,3)--(1.5,3)--(1,3.8)--cycle;
\draw[fill,green](1,3.8)circle(1pt);
\draw[thick,|->](.5,3)--(1.5,3);
\node[right] at(3,3.4){$\<x> $ };

\draw[fill,red!30](.5,4)--(2.5,4)--(3,4.8)--(0,4.8)--cycle;
\draw[thick,red](.6,4)--(2.45,4);
\draw[thick,green](.5,4)--(0,4.8)--(3,4.8)--(2.5,4);
\draw[fill,green](0,4.8)circle(1pt);\draw[fill,green](3,4.8)circle(1pt);
\draw[thick,green,<-](.5,4)--(.6,4);\draw[thick,green,-|](2.45,4)--(2.5,4);
\node[right] at(3,4.4){$\<t>, \; t\leqslant 0 $ };
\end{tikzpicture}
\caption{$ \<x+t>,\; \<y+t>$ for \hspace{1.5cm} $-t\geqslant x> 0$  and $-t\geqslant y> 0$.}
\label{f53}
\end{center}
\end{minipage}
\hfill
\begin{minipage}[t]{.52\textwidth}
\begin{center} 
\begin{tikzpicture}[>=stealth]
\draw[fill,gray!60](0,0)--(2,0)--(1.5,.8)--(.5,.8)--cycle;
\draw(.5,.8)--(0,0)--(2,0)--(1.5,.8);
\draw[thick,green](.5,.8)--(1.5,.8);
\draw[fill,green](.5,.8)circle(1pt);\draw[fill,green](1.5,.8)circle(1pt);
\draw[thick,|->](0,0)--(2,0);
\node[right] at(5,.4){$\<y+t> $ };
  
\draw[fill,gray!60](0,1)--(4,1)--(3.5,1.8)--(.5,1.8)--cycle;
\draw(.5,1.8)--(0,1)--(4,1)--(3.5,1.8);
\draw[thick,green](.5,1.8)--(3.5,1.8);
\draw[fill,green](.5,1.8)circle(1pt);\draw[fill,green](3.5,1.8)circle(1pt);
\draw[thick,|->](0,1)--(4,1);
\node[right] at(5,1.4){$\<y> $ };

\draw[fill,red!30](4,2.8)--(4.25,2.4)--(4.5,2.8)--cycle;
\draw[thick,green](4,2.8)--(4.25,2.4)--(4.5,2.8)--cycle;
\draw[fill,green](4,2.8)circle(1pt);\draw[fill,green](4.5,2.8)circle(1pt);
\draw[fill,gray!60](4,2)--(4.5,2)--(4.25,2.4)--cycle;
\draw(4,2)--(4.5,2)--(4.25,2.4)--cycle;
\draw[thick,|->](4,2)--(4.5,2);
\draw[fill](4.25,2.4)circle(1pt);
\node[right] at(5,2.4){$\<x+t> $ }; 

\draw[fill,gray!60](2,3)--(4.5,3)--(4,3.8)--(2.5,3.8)--cycle;
\draw(2.5,3.8)--(2,3)--(4.5,3)--(4,3.8);
\draw[thick,green](2.5,3.8)--(4,3.8);
\draw[fill,green](2.5,3.8)circle(1pt);\draw[fill,green](4,3.8)circle(1pt);
\draw[thick,|->](2,3)--(4.5,3);
\node[right] at(4.5,3.4){$\<x> $ };

\draw[fill,red!30](2,4)--(4,4)--(4.5,4.8)--(1.5,4.8)--cycle;
\draw[thick,red](2.1,4)--(3.95,4);
\draw[thick,green](2,4)--(1.5,4.8)--(4.5,4.8)--(4,4);
\draw[fill,green](1.5,4.8)circle(1pt);\draw[fill,green](4.5,4.8)circle(1pt);
\draw[thick,green,<-](2,4)--(2.1,4);\draw[thick,green,-|](3.95,4)--(4,4);
\node[right] at(5,4.4){$\<t>, \; t\leqslant 0 $ };
\end{tikzpicture}
\caption{$\<x+t>,\; \<y+t>$ for $x+t> 0$ \hspace{1.5cm} and $y+t> 0$.}
\label{f54}
\end{center}
\end{minipage}
\end{figure}

\noindent  We must show that the Eq. (\ref{382}) is true in the geometric sense for all $x,y,t$.

 \begin{thm}
      $\forall x,y,t \in \mathbb{R}$ Equation (\ref{382}) is true in the geometric sense. \label{t7.1}
\end{thm}
\begin{proof}
\noindent  We need to consider three cases from section 6 for positive  $ x,y $ and any $ t $.\\
\begin{enumerate}
\item[(1)] $t >0$, (Fig. ~\ref{f55}).\\[.1cm]
\text{In next cases  $t<0$}.
\item[(2)] $x+t>0, y+t>0 $, (Fig. ~\ref{f56}).
\item[(3)] $x+t>0, y+t<0$, (Fig. ~\ref{f57}).
\end{enumerate}
\begin{figure}[H]
\begin{minipage}[t]{.44\textwidth}
\begin{center} 
\begin{tikzpicture}[>=stealth]
\draw[fill,gray!60](0,0)--(4,0)--(3.5,.8)--(.5,.8)--cycle;
\draw(.5,.8)--(0,0)--(4,0)--(3.5,.8);
\draw[thick,green](.5,.8)--(3.5,.8);
\draw[fill,green](.5,.8)circle(1pt);\draw[fill,green](3.5,.8)circle(1pt);
\draw[thick,|->](0,0)--(4,0);
\node[right] at(4,.4){$\<x+y+t> $ };
  
\draw[fill,red!30](1,1)--(2,1)--(1.5,1.8)--cycle;
\draw[red](1,1)--(2,1)--(1.5,1.8)--cycle;
\draw[red,thick,<-|](1,1)--(2,1);
\draw[fill,green](1.5,1.8)circle(1pt);
\node[right] at(4,1.4){$-\<t> $ };

\draw[fill,gray!60](0,2)--(2,2)--(1.5,2.8)--(0.5,2.8)--cycle;
\draw(.5,2.8)--(0,2)--(2,2)--(1.5,2.8);
\draw[thick,green](.5,2.8)--(1.5,2.8);
\draw[fill,green](.5,2.8)circle(1pt);\draw[fill,green](1.5,2.8)circle(1pt);
\draw[thick,|->](0,2)--(2,2);
\node[right] at(4,2.4){$\<y+t> $ }; 

\draw[fill,gray!60](1,3)--(4,3)--(3.5,3.8)--(1.5,3.8)--cycle;
\draw(1.5,3.8)--(1,3)--(4,3)--(3.5,3.8);
\draw[thick,green](1.5,3.8)--(3.5,3.8);
\draw[fill,green](1.5,3.8)circle(1pt);\draw[fill,green](3.5,3.8)circle(1pt);
\draw[thick,|->](1,3)--(4,3);
\node[right] at(4,3.4){$\<x+t> $ };

\draw[fill,gray!60](1,4)--(2,4)--(1.5,4.8)--cycle;
\draw(1,4)--(2,4)--(1.5,4.8)--cycle;
\draw[thick,|->](1,4)--(2,4);
\draw[fill,green](1.5,4.8)circle(1pt);
\node[right] at(4,4.4){$\<t> $ };
\end{tikzpicture}
\caption{$ \<x+y+t>,$ for $x>0$,\hspace{.8cm} $y>0,\ t>0$.}
\label{f55}
\end{center}
\end{minipage}
\hfill
\begin{minipage}[t]{.45\textwidth}
\begin{center} 
\begin{tikzpicture}[>=stealth]
\draw[fill,gray!60](0,0)--(3.5,0)--(3,.8)--(.5,.8)--cycle;
\draw(.5,.8)--(0,0)--(3.5,0)--(3,.8);
\draw[thick,green](.5,.8)--(3,.8);
\draw[fill,green](.5,.8)circle(1pt);\draw[fill,green](3,.8)circle(1pt);
\draw[thick,|->](0,0)--(3.5,0);
\node[right] at(3.5,.4){$\<x+y+t> $ };
  
\draw[fill,gray!60](2,1)--(3,1)--(3.5,1.8)--(1.5,1.8)--cycle;
\draw[thick,green](2,1)--(1.5,1.8)--(3.5,1.8)--(3,1);
\draw[fill,green](1.5,1.8)circle(1pt);\draw[fill,green](3.5,1.8)circle(1pt);
\draw[thick,green,|-](2,1)--(2.05,1);\draw[thick,green,->](2.9,1)--(3,1);
\draw[thick](2.05,1)--(2.9,1);
\node[right] at(3.5,1.4){$-\<t> $ };

\draw[fill,gray!60](0,2)--(2,2)--(1.5,2.8)--(.5,2.8)--cycle;
\draw(.5,2.8)--(0,2)--(2,2)--(1.5,2.8);
\draw[thick,green](.5,2.8)--(1.5,2.8);
\draw[fill,green](.5,2.8)circle(1pt);\draw[fill,green](1.5,2.8)circle(1pt);
\draw[thick,|->](0,2)--(2,2);
\node[right] at(3.5,2.4){$\<y+t> $ }; 

\draw[fill,red!30](3,3.8)--(3.25,3.4)--(3.5,3.8)--cycle;
\draw[thick,green](3,3.8)--(3.25,3.4)--(3.5,3.8)--cycle;
\draw[fill,green](3,3.8)circle(1pt);\draw[fill,green](3.5,3.8)circle(1pt);
\draw[fill,gray!60](3,3)--(3.5,3)--(3.25,3.4)--cycle;
\draw(3,3)--(3.5,3)--(3.25,3.4)--cycle;
\draw[thick,|->](3,3)--(3.5,3);
\draw[fill](3.25,3.4)circle(1pt);
\node[right] at(3.5,3.4){$\<x+t> $ };

\draw[fill,red!30](2,4)--(3,4)--(3.5,4.8)--(1.5,4.8)--cycle;
\draw[thick,green](3,4)--(3.5,4.8)--(1.5,4.8)--(2,4)--cycle;
\draw[fill,green](3.5,4.8)circle(1pt);\draw[fill,green](1.5,4.8)circle(1pt);
\draw[thick,green,<-](2,4)--(2.1,4);\draw[thick,green,-|](2.95,4)--(3,4);
\draw[thick,red](2.1,4)--(2.95,4);
\node[right] at(3.5,4.4){$\<t> $ };
\end{tikzpicture}
\caption{$ \<x+y+t>,$ for $x+t>0$,\hspace{.5cm} $y+t>0$, $t<0$.}
\label{f56}
\end{center}
\end{minipage}
\end{figure}

 \begin{figure}[H]
\begin{center} 
\begin{tikzpicture}[>=stealth]
\draw[fill,gray!60](1.5,0)--(4.5,0)--(4,.8)--(2,.8)--cycle;
\draw(2,.8)--(1.5,0)--(4.5,0)--(4,.8);
\draw[thick,green](2,.8)--(4,.8);
\draw[fill,green](2,.8)circle(1pt);\draw[fill,green](4,.8)circle(1pt);
\draw[thick,|->](1.5,0)--(4.5,0);
\node[right] at(4.5,.4){$\<x+y+t> $ };
  
\draw[fill,gray!60](.5,1)--(2.5,1)--(3,1.8)--(0,1.8)--cycle;
\draw[thick,green](2.5,1)--(3,1.8)--(0,1.8)--(.5,1);
\draw[fill,green](0,1.8)circle(1pt);\draw[fill,green](3,1.8)circle(1pt);
\draw[thick,green,|-](.5,1)--(.55,1);\draw[thick,green,->](2.4,1)--(2.5,1);
\draw[thick](.55,1)--(2.4,1);
\node[right] at(4.5,1.4){$-\<t> $ };

\draw[fill,red!30](.5,2)--(1.5,2)--(2,2.8)--(0,2.8)--cycle;
\draw[thick,green](1.5,2)--(2,2.8)--(0,2.8)--(.5,2);
\draw[fill,green](0,2.8)circle(1pt);\draw[fill,green](2,2.8)circle(1pt);
\draw[thick,green,<-](.5,2)--(.6,2);\draw[thick,green,-|](1.45,2)--(1.5,2);
\draw[thick,red](.6,2)--(1.45,2);
\node[right] at(4.5,2.4){$\<y+t> $ }; 

\draw[fill,gray!60](2.5,3)--(4.5,3)--(4,3.8)--(3,3.8)--cycle;
\draw(4,3.8)--(4.5,3)--(2.5,3)--(3,3.8);
\draw[thick,green](4,3.8)--(3,3.8);
\draw[fill,green](4,3.8)circle(1pt);\draw[fill,green](3,3.8)circle(1pt);
\draw[thick,|->](2.5,3)--(4.5,3);
\node[right] at(4.5,3.4){$\<x+t> $ };

\draw[fill,red!30](.5,4)--(2.5,4)--(3,4.8)--(0,4.8)--cycle;
\draw[thick,green](2.5,4)--(3,4.8)--(0,4.8)--(.5,4);
\draw[fill,green](3,4.8)circle(1pt);\draw[fill,green](0,4.8)circle(1pt);
\draw[thick,green,<-](.5,4)--(.6,4);\draw[thick,green,-|](2.45,4)--(2.5,4);
\draw[thick,red](.6,4)--(2.45,4);
\node[right] at(4.5,4.4){$\<t> $ };
\end{tikzpicture}
\caption{$ \<x+y+t>,$ for $x+t>0$, $y+t<0, t<0$.}
\label{f57}
\end{center}
\end{figure}

\noindent The cases for $x=0 \vee y=0 \vee t=0$ we leave for consideration to the reader.\\
\end{proof}

\noindent Using Eq.(\ref{38})  we can check  the interpretation of the element $\<\frac{1}{3}+\frac{1}{3}+0>=\<\frac{2}{3}>$
 (Fig.~\ref{f58}). (The Figure is to a scale enlarged twice).
 \begin{figure}[H]
\begin{center} 
\begin{tikzpicture}[scale=1.36]
\draw[fill,red!30](.5,.4)--(1,1.2)--(0,1.2)--cycle;
\draw[thick,green](1,1.2)--(0,1.2);
\draw[thick,green](1,1.2)--(.5,.4)--(0,1.2);
\draw[fill,green](0,1.2)circle(1pt);
\draw[fill,gray!60](.25,0)--(.75,0)--(.5,.4)--cycle;
\draw(.25,0)--(.75,0)--(.5,.4)--cycle;
\draw[fill](.5,.4)circle(1pt);

\draw[fill,red!30](1,.4)--(1.5,1.2)--(.5,1.2)--cycle;
\draw[thick, green](1.5,1.2)--(1,.4)--(.75,.8);
\draw[red](.5,1.2)--(.75,.8)--(1,1.2);
\draw[thick,green](1.5,1.2)--(0,1.2);
\draw[fill,green](.75,.8)circle(1pt);

\draw[fill,green](1.5,1.2)circle(1pt);
\draw[fill,gray!60](.75,0)--(1.25,0)--(1,.4)--cycle;
\draw(.75,0)--(1.25,0)--(1,.4)--cycle;
\draw[fill](1,.4)circle(1pt);
\draw[fill](.75,0)circle(1pt);
\draw node at(.75,1.06){\small{2}};
\draw node at(.75,-.2){\small{2}};
\node at(1.75,.6){+};

\draw[fill,gray!60](2,1.2)--(2.75,0)--(3.5,1.2)--cycle;
\draw[thick,green](2,1.2)--(2.75,0)--(3.5,1.2)--cycle;
\draw[fill,green](2,1.2)circle(1pt);\draw[fill,green](3.5,1.2)circle(1pt);
\draw[fill,red](2.75,0)circle(1pt);
\node at(3.75,.6){=};

\draw[fill,red!30](4.25,1.2)--(4.5,.8)--(4.75,1.2)--cycle;
\draw[thick,green](4.25,1.2)--(4.5,.8)--(4.75,1.2)--cycle;
\draw[fill,green](4.25,1.2)circle(1pt);\draw[fill,green](4.75,1.2)circle(1pt);
\draw[fill](4.5,.8)circle(1pt);
\draw[fill,gray!60](4,0)--(5,0)--(4.5,.8)--cycle;
\draw(4,0)--(5,0)--(4.5,.8)--cycle;
\end{tikzpicture}
\caption{$\<\frac{1}{3}>+\<\frac{1}{3}>+\big(-\<0>\big)=\<\frac{2}{3}>$. }
\label{f58}
\end{center}
\end{figure} 

\noindent The Figure ~\ref{f59} shows that the interpretation of the element $\<\frac{1}{3}>$ is well defined.

\begin{figure}[H]
\begin{center} 
\begin{tikzpicture}[scale=1.36]
\draw[fill,red!30](.5,.4)--(1,1.2)--(0,1.2)--cycle;
\draw[thick,green](1,1.2)--(.5,.4)--(0,1.2)--cycle;
\draw[fill,green](0,1.2)circle(1pt);\draw[fill,green](1,1.2)circle(1pt);
\draw[fill,gray!60](.25,0)--(.75,0)--(.5,.4)--cycle;
\draw(.25,0)--(.75,0)--(.5,.4)--cycle;
\draw[fill](.5,.4)circle(1pt);
\node at(.9,.6){+};

\draw[fill,red!30](1,1.2)--(1.25,.8)--(1.5,1.2)--cycle;
\draw[thick, green](1,1.2)--(1.25,.8)--(1.5,1.2)--cycle;
\draw[fill,green](1,1.2)circle(1pt);\draw[fill,green](1.5,1.2)circle(1pt);
\draw[fill,gray!60](.75,0)--(1.75,0)--(1.25,.8)--cycle;
\draw(.75,0)--(1.75,0)--(1.25,.8)--cycle;
\draw[fill](1.25,.8)circle(1pt);
\draw[fill](.75,0)circle(1pt);
\draw node at(.75,-.2){\small{2}};
\node at(2,.6){+};

\draw[fill,gray!60](2.25,1.2)--(3,0)--(3.75,1.2)--cycle;
\draw[thick,green](2.25,1.2)--(3,0)--(3.75,1.2)--cycle;
\draw[fill,green](2.25,1.2)circle(1pt);\draw[fill,green](3.75,1.2)circle(1pt);
\draw[fill,red](3,0)circle(1pt);
\node at(4,.6){=};

\draw[fill,gray!60](4.25,0)--(5.75,0)--(5,1.2)--cycle;
\draw[](4.25,0)--(5.75,0)--(5,1.2)--cycle;
\draw[fill,green](5,1.2)circle(1pt);
\end{tikzpicture}
\caption{$\<\frac{1}{3}>+\<\frac{2}{3}>+\big(-\<0>\big)=\<1>$. }
\label{f59}
\end{center}
\end{figure}

\noindent Let us assume that $\<1>_1=\<1>_2-\<0>_2$ , where $\<x>_1=\<x> \in \mathbb{R}_1 $, \;
$\<x>_2=\<x> \in \mathbb{R}_2 $. \\
Then\\ 
$\forall x\in \mathbb{R} \;\; \<x>_1 = \<x>_2-\<x-1>_2$.\\
But only in the geometric sense because\\
$\<x>_1 =(x,1)$ and $\<x>_2-\<x-1>=(2x-1,1,0)$.\\

\noindent The above interpretation of the set $\mathbb{R}_1$ is not the only one.
We can  use, for example the following interpretation.\\
\tikz{\draw[fill,gray!60] (0,0)--(2,0)--(1.5,.8)--(.5,.8)--cycle;
\draw (0,0)--(2,0)--(1.5,.8)--(.5,.8)--cycle;} \ as the  element $\<1>$ and \quad
\tikz{\draw[fill,gray!70] (0,0)--(1,0)--(.5,.8)--(0,0);
\draw (0,0)--(1,0)--(.5,.8)--(0,0);
\draw[fill](0,0)circle(1pt);\draw[fill](1,0)circle(1pt);
\draw[fill](.5,.8)circle(1pt);} \ as the element $\<0>$.\\

\noindent Geometric interpretation of the set $\mathbb{R}_1$ in the  section 7 shows that the elements of  
  the set $\mathbb{R}_2$ can be interpreted in the form of three-dimensional figures.
 But that is material for another article.

\end{document}